\newtheorem{theorem}{Theorem}[section]
\newtheorem{lemma}[theorem]{Lemma}
\newtheorem{proposition}[theorem]{Proposition}
\theoremstyle{definition}
\newtheorem{definition}[theorem]{Definition}
\numberwithin{equation}{section}
\newcommand{\be}{\begin{equation}}
\newcommand{\ee}{\end{equation}}
\numberwithin{equation}{section}
\patchcmd{\@settitle}{\uppercasenonmath\@title}{}{}{}
\patchcmd{\@setauthors}{\MakeUppercase}{}{}{}
\begin{document}

\setcounter{page}{1}
\title[Controlled continuous g-Frames in Hilbert $C^{\ast}$-Modules]{Controlled continuous g-Frames in Hilbert $C^{\ast}$-Modules}

\author[H. LABRIGUI$^{*}$, A. TOURI, S. KABBAJ]{H. LABRIGUI$^1$$^{\ast}$, A. TOURI$^1$,  \MakeLowercase{and} S. KABBAJ$^1$}

\address{$^{1}$Department of Mathematics, University of Ibn Tofail, B.P. 133, Kenitra, Morocco}
\email{\textcolor[rgb]{0.00,0.00,0.84}{  hlabrigui75@gmail;  touri.abdo68@gmail.com;samkabbaj@yahoo.fr}}

\subjclass[2010]{41A58, 42C15}

\keywords{Continuous g-Frames, Controlled continuous g-frames, $C^{\ast}$-algebra, Hilbert $\mathcal{A}$-modules.\\
\indent
\\
\indent $^{*}$ Corresponding author}
\maketitle
\begin{abstract}
Frame Theory has a great revolution in recent years, this Theory have been extended from Hilbert spaces to Hilbert  $C^{\ast}$-modules. The purpose of this paper is the introduction and the study of the concept of Controlled  Continuous g-Frames in Hilbert $C^{\ast}$-Modules. Also we give some properties.
\end{abstract}

\section{Introduction and preliminaries}
The concept of frames in Hilbert spaces has been introduced by Duffin and Schaeffer \cite{Duf} in 1952 to study some deep problems in nonharmonic Fourier
series, after the fundamental paper \cite{13} by Daubechies, Grossman and Meyer, frames
theory began to be widely used, particularly in the more specialized context of wavelet frames and Gabor frames \cite{Gab}.

Hilbert $C^{\ast}$-module arose as generalizations of the notion Hilbert space. The basic idea was to consider modules over $C^{\ast}$-algebras instead of linear spaces and to allow the inner product to take values in the $C^{\ast}$-algebras \cite{LEC}. 

Continuous frames defined by Ali, Antoine and Gazeau \cite{STAJP}. Gabardo and Han in \cite{14} called these kinds frames, frames associated with measurable spaces. For more details, the reader can refer to \cite{MR1}, \cite{MR2} and \cite{ARAN}.

Theory of frames have been extended from Hilbert spaces to Hilbert  $C^{\ast}$-modules \cite{Frank}, \cite{R1}, \cite{R2}, \cite{R3}, \cite{R4}.




In the following we briefly recall the definitions and basic properties of $C^{\ast}$-algebra, Hilbert $\mathcal{A}$-modules. Our reference for $C^{\ast}$-algebras is \cite{{Dav},{Con}}. For a $C^{\ast}$-algebra $\mathcal{A}$ if $a\in\mathcal{A}$ is positive we write $a\geq 0$ and $\mathcal{A}^{+}$ denotes the set of positive elements of $\mathcal{A}$.
\begin{definition}\cite{Con}.	
	Let $ \mathcal{A} $ be a unital $C^{\ast}$-algebra and $\mathcal{H}$ be a left $ \mathcal{A} $-module, such that the linear structures of $\mathcal{A}$ and $ \mathcal{H} $ are compatible. $\mathcal{H}$ is a pre-Hilbert $\mathcal{A}$-module if $\mathcal{H}$ is equipped with an $\mathcal{A}$-valued inner product $\langle.,.\rangle_{\mathcal{A}} :\mathcal{H}\times\mathcal{H}\rightarrow\mathcal{A}$, such that is sesquilinear, positive definite and respects the module action. In the other words,
	\begin{itemize}
		\item [(i)] $ \langle x,x\rangle_{\mathcal{A}}\geq0 $ for all $ x\in\mathcal{H} $ and $ \langle x,x\rangle_{\mathcal{A}}=0$ if and only if $x=0$.
		\item [(ii)] $\langle ax+y,z\rangle_{\mathcal{A}}=a\langle x,y\rangle_{\mathcal{A}}+\langle y,z\rangle_{\mathcal{A}}$ for all $a\in\mathcal{A}$ and $x,y,z\in\mathcal{H}$.
		\item[(iii)] $ \langle x,y\rangle_{\mathcal{A}}=\langle y,x\rangle_{\mathcal{A}}^{\ast} $ for all $x,y\in\mathcal{H}$.
	\end{itemize}	 
	For $x\in\mathcal{H}, $ we define $||x||=||\langle x,x\rangle_{\mathcal{A}}||^{\frac{1}{2}}$. If $\mathcal{H}$ is complete with $||.||$, it is called a Hilbert $\mathcal{A}$-module or a Hilbert $C^{\ast}$-module over $\mathcal{A}$. For every $a$ in $C^{\ast}$-algebra $\mathcal{A}$, we have $|a|=(a^{\ast}a)^{\frac{1}{2}}$ and the $\mathcal{A}$-valued norm on $\mathcal{H}$ is defined by $|x|=\langle x, x\rangle_{\mathcal{A}}^{\frac{1}{2}}$ for all $x\in\mathcal{H}$.
	
	Let $\mathcal{H}$ and $\mathcal{K}$ be two Hilbert $\mathcal{A}$-modules, A map $T:\mathcal{H}\rightarrow\mathcal{K}$ is said to be adjointable if there exists a map $T^{\ast}:\mathcal{K}\rightarrow\mathcal{H}$ such that $\langle Tx,y\rangle_{\mathcal{A}}=\langle x,T^{\ast}y\rangle_{\mathcal{A}}$ for all $x\in\mathcal{H}$ and $y\in\mathcal{K}$.
	
We reserve the notation $End_{\mathcal{A}}^{\ast}(\mathcal{H},\mathcal{K})$ for the set of all adjointable operators from $\mathcal{H}$ to $\mathcal{K}$ and $End_{\mathcal{A}}^{\ast}(\mathcal{H},\mathcal{H})$ is abbreviated to $End_{\mathcal{A}}^{\ast}(\mathcal{H})$.

\end{definition}

The following lemmas will be used to prove our mains results
\begin{lemma} \label{1} \cite{Pas}.
	Let $\mathcal{H}$ be Hilbert $\mathcal{A}$-module. If $T\in End_{\mathcal{A}}^{\ast}(\mathcal{H})$, then $$\langle Tx,Tx\rangle\leq\|T\|^{2}\langle x,x\rangle \qquad \forall x\in\mathcal{H}.$$
\end{lemma}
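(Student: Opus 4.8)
The plan is to pass from $T$ to the positive operator $T^{\ast}T$ and then to exploit the fact that $End_{\mathcal{A}}^{\ast}(\mathcal{H})$ is itself a $C^{\ast}$-algebra, so that its order structure and the continuous functional calculus are at our disposal. First I would use adjointability to rewrite the left-hand side as $\langle Tx, Tx\rangle = \langle T^{\ast}Tx, x\rangle$, which turns the claim into
$$\langle T^{\ast}Tx, x\rangle \leq \|T\|^{2}\langle x, x\rangle \qquad \forall x\in\mathcal{H}.$$
Setting $S:=T^{\ast}T$ and recalling the $C^{\ast}$-identity $\|S\| = \|T^{\ast}T\| = \|T\|^{2}$, it suffices to prove the general statement that for a positive operator $S$ one has $\langle Sx, x\rangle \leq \|S\|\,\langle x, x\rangle$ for all $x$.

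The key step is to establish the operator inequality $\|S\|\,\id - S \geq 0$ in $End_{\mathcal{A}}^{\ast}(\mathcal{H})$. Since $S = T^{\ast}T$ is self-adjoint and positive, its spectrum lies in $[0, \|S\|]$, so applying the continuous functional calculus to the nonnegative function $t\mapsto \|S\|-t$ on $\sigma(S)$ shows that $\|S\|\,\id - S$ is a positive element of the $C^{\ast}$-algebra $End_{\mathcal{A}}^{\ast}(\mathcal{H})$. Positivity then furnishes a factorization $\|S\|\,\id - S = R^{\ast}R$ for some $R\in End_{\mathcal{A}}^{\ast}(\mathcal{H})$ (one may simply take $R = (\|S\|\,\id - S)^{1/2}$).

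With this in hand the conclusion is immediate: for every $x\in\mathcal{H}$,
$$\|S\|\,\langle x, x\rangle - \langle Sx, x\rangle = \langle (\|S\|\,\id - S)x, x\rangle = \langle R^{\ast}Rx, x\rangle = \langle Rx, Rx\rangle \geq 0,$$
so that $\langle Sx, x\rangle \leq \|S\|\,\langle x, x\rangle$. Substituting $S = T^{\ast}T$, using $\|S\| = \|T\|^{2}$, and recalling $\langle T^{\ast}Tx, x\rangle = \langle Tx, Tx\rangle$ gives the desired inequality.

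I expect the main obstacle to be the middle step, namely making sure that positivity in the pointwise sense $\langle Sx, x\rangle \geq 0$ is genuinely compatible with the order coming from the $C^{\ast}$-algebra $End_{\mathcal{A}}^{\ast}(\mathcal{H})$, and that the spectral bound transfers to give $\|S\|\,\id - S \geq 0$. This rests entirely on the fact that $End_{\mathcal{A}}^{\ast}(\mathcal{H})$ is a $C^{\ast}$-algebra, so that spectral theory and the functional calculus apply; once that is accepted, the remaining computations are routine.
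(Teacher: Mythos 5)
The paper does not prove this lemma; it is quoted verbatim from the cited reference \cite{Pas} (Paschke). Your argument is correct and is in fact the classical proof from that literature: reduce to $S=T^{\ast}T$, use the $C^{\ast}$-identity $\|T^{\ast}T\|=\|T\|^{2}$ and the spectral bound to get $\|S\|\,\mathrm{id}-S\geq 0$ in the $C^{\ast}$-algebra $End_{\mathcal{A}}^{\ast}(\mathcal{H})$, factor it as $R^{\ast}R$, and conclude $\langle Sx,x\rangle\leq\|S\|\langle x,x\rangle$ pointwise. Note that you only need the easy direction of the compatibility you worry about at the end (algebraic positivity $a=R^{\ast}R$ implies $\langle ax,x\rangle\geq 0$), which your factorization step already supplies, so there is no gap.
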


\begin{lemma} \label{sb} \cite{Ara}.
	Let $\mathcal{H}$ and $\mathcal{K}$ two Hilbert $\mathcal{A}$-modules and $T\in End^{\ast}(\mathcal{H},\mathcal{K})$. Then the following statements are equivalent:
	\begin{itemize}
		\item [(i)] $T$ is surjective.
		\item [(ii)] $T^{\ast}$ is bounded below with respect to norm, i.e., there is $m>0$ such that $\|T^{\ast}x\|\geq m\|x\|$ for all $x\in\mathcal{K}$.
		\item [(iii)] $T^{\ast}$ is bounded below with respect to the inner product, i.e., there is $m'>0$ such that $\langle T^{\ast}x,T^{\ast}x\rangle\geq m'\langle x,x\rangle$ for all $x\in\mathcal{K}$.
	\end{itemize}
\end{lemma}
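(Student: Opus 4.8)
The plan is to establish the cycle of implications (iii)$\,\Rightarrow\,$(ii)$\,\Rightarrow\,$(i)$\,\Rightarrow\,$(iii), working throughout with the positive adjointable operator $TT^{\ast}\in End_{\mathcal A}^{\ast}(\mathcal K)$, which is the natural object linking the order structure of the $\mathcal A$-valued inner product to the surjectivity of $T$. The implication (iii)$\,\Rightarrow\,$(ii) is immediate: applying the $C^{\ast}$-norm to the inequality $\langle T^{\ast}x,T^{\ast}x\rangle\geq m'\langle x,x\rangle$ and using that the norm is monotone on $\mathcal A^{+}$ gives $\|T^{\ast}x\|^{2}=\|\langle T^{\ast}x,T^{\ast}x\rangle\|\geq m'\|\langle x,x\rangle\|=m'\|x\|^{2}$, so (ii) holds with $m=\sqrt{m'}$.

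For (ii)$\,\Rightarrow\,$(i), I would first promote the norm bound on $T^{\ast}$ to a norm bound on $TT^{\ast}$. Using the Cauchy--Schwarz inequality for the $\mathcal A$-valued inner product together with $\langle TT^{\ast}x,x\rangle=\langle T^{\ast}x,T^{\ast}x\rangle$, one gets $\|TT^{\ast}x\|\,\|x\|\geq\|\langle T^{\ast}x,T^{\ast}x\rangle\|=\|T^{\ast}x\|^{2}\geq m^{2}\|x\|^{2}$, hence $\|TT^{\ast}x\|\geq m^{2}\|x\|$. Thus $TT^{\ast}$ is bounded below in norm, so it is injective with closed range; being self-adjoint and adjointable with closed range, its range is orthogonally complemented and $\ran(TT^{\ast})=(\ker TT^{\ast})^{\perp}$. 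Since $TT^{\ast}x=0$ forces $\langle T^{\ast}x,T^{\ast}x\rangle=0$, i.e.\ $T^{\ast}x=0$, and the norm bound already makes $T^{\ast}$ injective, we obtain $\ker TT^{\ast}=\{0\}$ and therefore $\ran(TT^{\ast})=\mathcal K$. As $\ran T\supseteq\ran(TT^{\ast})=\mathcal K$, the operator $T$ is surjective.

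Finally, for (i)$\,\Rightarrow\,$(iii), I would show that surjectivity of $T$ forces $TT^{\ast}$ to be invertible in $End_{\mathcal A}^{\ast}(\mathcal K)$. Injectivity of $TT^{\ast}$ follows as above from $\ker T^{\ast}=(\ran T)^{\perp}=\{0\}$, while for surjectivity one invokes that an adjointable operator has closed range precisely when its associated positive operator does, so that $\ran T=\mathcal K$ being closed yields $\ran(TT^{\ast})$ closed with $\overline{\ran(TT^{\ast})}=\overline{\ran T}=\mathcal K$. Hence $S:=TT^{\ast}$ is a positive invertible operator, and by the spectral lower bound $S\geq\|S^{-1}\|^{-1}\,\id$ one obtains $\langle T^{\ast}x,T^{\ast}x\rangle=\langle Sx,x\rangle\geq\|S^{-1}\|^{-1}\langle x,x\rangle$, which is exactly (iii) with $m'=\|(TT^{\ast})^{-1}\|^{-1}$.

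The main obstacle is not any single computation but the repeated reliance on two structural facts specific to Hilbert $C^{\ast}$-modules: that an adjointable operator with closed range has an orthogonally complemented range, so that $(\ker S)^{\perp}=\ran S$ for the self-adjoint operator $S=TT^{\ast}$; and that norm-level estimates must be upgraded to order estimates in $\mathcal A$ by routing them through the invertibility of $TT^{\ast}$, since in a general Hilbert $C^{\ast}$-module a norm lower bound on a map does not by itself yield an $\mathcal A$-valued inner-product lower bound. Organising the argument around $TT^{\ast}$ is what makes both upgrades available simultaneously.
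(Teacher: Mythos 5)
The paper offers no proof of this lemma at all --- it is imported verbatim from \cite{Ara} as a tool --- so there is nothing internal to compare your argument against; I can only judge it on its own terms, and on those terms it is correct and is essentially the standard argument one finds in the literature on adjointable operators. The cycle (iii)$\Rightarrow$(ii)$\Rightarrow$(i)$\Rightarrow$(iii) is sound: the norm monotonicity on $\mathcal{A}^{+}$ gives (iii)$\Rightarrow$(ii); the Cauchy--Schwarz step $\|T^{\ast}x\|^{2}=\|\langle TT^{\ast}x,x\rangle\|\leq\|TT^{\ast}x\|\,\|x\|$ correctly upgrades the bound to $TT^{\ast}$; and the passage from ``bounded below, hence closed range'' to ``surjective'' is legitimate \emph{provided} you cite the theorem that an adjointable operator with closed range has orthogonally complemented range (Lance, Theorem~3.2) --- you rightly flag this as the structural crux, since in a general Hilbert $C^{\ast}$-module closed submodules need not be complemented and the Hilbert-space argument would otherwise break down. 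Two small remarks: for (i)$\Rightarrow$(iii) you are in effect re-proving part (ii) of Lemma~\ref{3} of this paper ($T$ surjective implies $TT^{\ast}$ invertible with $\|(TT^{\ast})^{-1}\|^{-1}\leq TT^{\ast}$), so you could have invoked that lemma directly and saved the closed-range bookkeeping; and in (ii)$\Rightarrow$(i) the sentence deducing $\ker TT^{\ast}=\{0\}$ via $T^{\ast}x=0$ is redundant, since $\|TT^{\ast}x\|\geq m^{2}\|x\|$ already gives injectivity. Neither affects correctness.
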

\begin{lemma} \label{3} \cite{Deh}.
	Let $\mathcal{H}$ and $\mathcal{K}$ two Hilbert $\mathcal{A}$-modules and $T\in End^{\ast}(\mathcal{H},\mathcal{K})$. Then:
	\begin{itemize}
		\item [(i)] If $T$ is injective and $T$ has closed range, then the adjointable map $T^{\ast}T$ is invertible and $$\|(T^{\ast}T)^{-1}\|^{-1}\leq T^{\ast}T\leq\|T\|^{2}.$$
		\item  [(ii)]	If $T$ is surjective, then the adjointable map $TT^{\ast}$ is invertible and $$\|(TT^{\ast})^{-1}\|^{-1}\leq TT^{\ast}\leq\|T\|^{2}.$$
	\end{itemize}	
\end{lemma}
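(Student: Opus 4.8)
The plan is to treat the two bounds in each item separately: the upper bounds $T^{\ast}T\le\|T\|^{2}$ and $TT^{\ast}\le\|T\|^{2}$ come for free from Lemma \ref{1}, while the invertibility together with the lower bounds $\|(T^{\ast}T)^{-1}\|^{-1}\le T^{\ast}T$ and $\|(TT^{\ast})^{-1}\|^{-1}\le TT^{\ast}$ will follow once I show that the relevant positive operator is bounded below by a positive multiple of the identity. The abstract fact I will quote to close both items is this: a self-adjoint $S\in End_{\mathcal{A}}^{\ast}(\mathcal{H})$ with $\langle Sx,x\rangle\ge m\langle x,x\rangle$ for all $x$ and some $m>0$ is positive with $S\ge m\,\id$, hence has $\sigma(S)\subseteq[m,\|S\|]$, so $0\notin\sigma(S)$ and $S$ is invertible; the continuous functional calculus then gives $\|S^{-1}\|^{-1}\id\le S\le\|S\|\id$. (Throughout, $\le\|T\|^{2}$ abbreviates $\le\|T\|^{2}\id$.)

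I would start with item (ii), which is the more direct one. Since $T$ is surjective, Lemma \ref{sb} applied to $T$ gives $m'>0$ with $\langle T^{\ast}x,T^{\ast}x\rangle\ge m'\langle x,x\rangle$ for all $x\in\mathcal{K}$. Using $\langle T^{\ast}x,T^{\ast}x\rangle=\langle TT^{\ast}x,x\rangle$, this says exactly that $TT^{\ast}$ is positive and bounded below, hence invertible by the fact above, giving $\|(TT^{\ast})^{-1}\|^{-1}\le TT^{\ast}$. The upper bound is Lemma \ref{1} applied to $T^{\ast}$, namely $\langle T^{\ast}x,T^{\ast}x\rangle\le\|T^{\ast}\|^{2}\langle x,x\rangle=\|T\|^{2}\langle x,x\rangle$, that is $TT^{\ast}\le\|T\|^{2}$.

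For item (i) the same engine applies, but first I must convert the hypothesis that $T$ is injective with closed range into surjectivity of $T^{\ast}$. Here I would invoke the structure theory of adjointable operators with closed range on Hilbert $C^{\ast}$-modules: if $T$ has closed range then $\ran T$ is orthogonally complemented, $T^{\ast}$ also has closed range, and $\mathcal{H}=\ker T\oplus\ran T^{\ast}$. Injectivity of $T$ forces $\ker T=\{0\}$, so $\ran T^{\ast}=\mathcal{H}$ and $T^{\ast}$ is surjective. Now Lemma \ref{sb} applied to $T^{\ast}$ (using $(T^{\ast})^{\ast}=T$) yields $m'>0$ with $\langle Tx,Tx\rangle\ge m'\langle x,x\rangle$, i.e. $T^{\ast}T\ge m'\,\id$; positivity and the lower bound again give invertibility and $\|(T^{\ast}T)^{-1}\|^{-1}\le T^{\ast}T$, while $\langle T^{\ast}Tx,x\rangle=\langle Tx,Tx\rangle\le\|T\|^{2}\langle x,x\rangle$ from Lemma \ref{1} gives $T^{\ast}T\le\|T\|^{2}$.

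The main obstacle is the structural step in (i): the passage from ``injective with closed range'' to ``$T^{\ast}$ surjective'' is the only place where something genuinely module-theoretic is needed, since in a Hilbert $C^{\ast}$-module closed range alone does not automatically guarantee the complementability of kernels and ranges that one takes for granted in a Hilbert space. I would therefore isolate this as the crucial lemma, quoting the standard result that a closed-range adjointable operator between Hilbert $C^{\ast}$-modules has complemented range and kernel. Once that is granted, both items reduce to Lemma \ref{sb}, Lemma \ref{1}, and the elementary spectral fact about positive, bounded-below elements of the $C^{\ast}$-algebra $End_{\mathcal{A}}^{\ast}(\mathcal{H})$.
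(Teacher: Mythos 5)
The paper does not actually prove this lemma: it is quoted verbatim from \cite{Deh} as a preliminary, so there is no in-paper argument to compare yours against. Judged on its own, your reconstruction is correct. Part (ii) is exactly the intended route: Lemma \ref{sb}(iii) gives $\langle TT^{\ast}x,x\rangle\geq m'\langle x,x\rangle$, positivity plus the spectral bound $\sigma(TT^{\ast})\subseteq[m',\|TT^{\ast}\|]$ yields invertibility and $\|(TT^{\ast})^{-1}\|^{-1}\leq TT^{\ast}$, and Lemma \ref{1} applied to $T^{\ast}$ gives the upper bound. For part (i) you correctly identify the one genuinely nontrivial input, namely the closed range theorem for adjointable operators on Hilbert $C^{\ast}$-modules (Lance, Theorem 3.2): closed range forces $\mathcal{H}=\ker T\oplus\overline{\ran T^{\ast}}$ with $\ran T^{\ast}$ closed and complemented, so injectivity makes $T^{\ast}$ surjective and Lemma \ref{sb} applies to $T^{\ast}$. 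This cannot be dispensed with, since in a general Hilbert $C^{\ast}$-module closedness of the range does not by itself give complementability, so isolating it as the crucial quoted lemma is the right call. The only cosmetic caveat is that Lemma \ref{1} is stated for endomorphisms of a single module, whereas you use the two-module version $\langle Tx,Tx\rangle\leq\|T\|^{2}\langle x,x\rangle$ for $T:\mathcal{H}\rightarrow\mathcal{K}$; that version is Paschke's inequality and holds by the same argument, but it is worth flagging that you are using the slightly more general form.
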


\section{Controlled continuous g-Frames in Hilbert $C^{\ast}$-Modules}
Let $X$ be a Banach space, $(\Omega,\mu)$ a measure space, and function $f:\Omega\to X$ a measurable function. Integral of the Banach-valued function $f$ has defined Bochner and others. Most properties of this integral are similar to those of the integral of real-valued functions. Because every $C^{\ast}$-algebra and Hilbert $C^{\ast}$-module is a Banach space thus we can use this integral and its properties.

Let $(\Omega,\mu)$ be a measure space, let $U$ and $V$ be two Hilbert $C^{\ast}$-modules, $\{V_{w}\}_{w\in\Omega}$  is a sequence of subspaces of V, and $End_{\mathcal{A}}^{\ast}(U,V_{w})$ is the collection of all adjointable $\mathcal{A}$-linear maps from $U$ into $V_{w}$.
We define
\begin{equation*}
	\oplus_{w\in\Omega}V_{w}=\left\{x=\{x_{w}\}_{w\in\Omega}: x_{w}\in V_{w}, \left\|\int_{\Omega}|x_{w}|^{2}d\mu(w)\right\|<\infty\right\}.
\end{equation*}
For any $x=\{x_{w}\}_{w\in\Omega}$ and $y=\{y_{w}\}_{w\in\Omega}$, if the $\mathcal{A}$-valued inner product is defined by $\langle x,y\rangle=\int_{\Omega}\langle x_{w},y_{w}\rangle d\mu(w)$, the norm is defined by $\|x\|=\|\langle x,x\rangle\|^{\frac{1}{2}}$, the $\oplus_{w\in\Omega}V_{w}$ is a Hilbert $C^{\ast}$-module.
Let $GL^{+}(U)$ be the set for all positive bounded linear invertible operators on $U$ with bounded inverse.
\begin{definition}\cite{MRKAN}
	We call $\{\Lambda_{w}\in End_{\mathcal{A}}^{\ast}(U,V_{w}): w\in\Omega\}$ a continuous g-frame for Hilbert $C^{\ast}$-module $U$ with respect to $\{V_{w}: w\in\Omega\}$ if:
	\begin{itemize}
		\item for any $x\in U$, the function $\tilde{x}:\Omega\rightarrow V_{w}$ defined by $\tilde{x}(w)=\Lambda_{w}x$ is measurable;
		\item there exist two strictly nonzero elements $A$ and $B$ in $\mathcal{A}$ such that
		\begin{equation} \label{2.1}
			A\langle x,x\rangle \leq\int_{\Omega}\langle\Lambda_{w}x,\Lambda_{w}x\rangle d\mu(w)\leq B\langle x,x\rangle , \forall x\in U.
		\end{equation}
	\end{itemize}
	The elements $A$ and $B$ are called continuous g-frame bounds. 
	
	If $A=B$ we call this continuous g-frame a continuous tight g-frame, and if $A=B=1_{\mathcal{A}}$ it is called a continuous Parseval g-frame. If only the right-hand inequality of \eqref{2.1} is satisfied, we call $\{\Lambda_{w}: w\in\Omega\}$ a 
	continuous g-Bessel sequence for $U$ with respect to $\{V_{w}: w\in\Omega\}$ with Bessel bound $B$.\\
	The contnuous g-frame operator $S$ on $U$ is :
	\begin{equation*}
	Sx=\int_{\Omega}\Lambda^{\ast}_{\omega}\Lambda_{\omega}xd\mu (\omega)
	\end{equation*}
	The frame operator S is a bounded, positive, selfadjoint, and invertible (see \cite{MRKAN})
\end{definition}

\begin{theorem}\cite{MRKAN}
Let $\Lambda= \{\Lambda_{w}\in End_{\mathcal{A}}^{\ast}(U,V_{w}): w\in\Omega\}$, then $\Lambda$ be a continuous g-frame for $U$ with respect to $\{V_{w}: w\in\Omega\}$ if and only if there exist a constants $A$ and $B$ such that for any $x\in U$ :
\begin{equation*}\label{ssp}
	A\|x\|^{2} \leq\| \int_{\Omega}\langle\Lambda_{w}x,\Lambda_{w}x\rangle d\mu(w)\| \leq B\|x\|^{2}
\end{equation*}
\end{theorem}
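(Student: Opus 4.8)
The plan is to reformulate both inequalities through the continuous g-frame operator $Sx=\int_{\Omega}\Lambda_{w}^{\ast}\Lambda_{w}x\,d\mu(w)$ and to use the identity
\begin{equation*}
\int_{\Omega}\langle\Lambda_{w}x,\Lambda_{w}x\rangle\,d\mu(w)=\langle Sx,x\rangle,\qquad x\in U,
\end{equation*}
which shows that the quantity appearing in both formulations is exactly $\langle Sx,x\rangle\geq0$. Since $S$ is positive and self-adjoint, I would write $\langle Sx,x\rangle=\langle S^{1/2}x,S^{1/2}x\rangle$, so that $\big\|\int_{\Omega}\langle\Lambda_{w}x,\Lambda_{w}x\rangle d\mu(w)\big\|=\|S^{1/2}x\|^{2}$. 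This identity is the bridge between the order structure of $\mathcal{A}$ used in \eqref{2.1} and the scalar norm estimate in the statement.

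For the implication from the g-frame definition to the scalar inequality I would simply take norms in \eqref{2.1}. For the upper bound, from $0\leq\langle Sx,x\rangle\leq B\langle x,x\rangle$ the monotonicity of the $C^{\ast}$-norm on positive elements together with submultiplicativity gives $\|\langle Sx,x\rangle\|\leq\|B\langle x,x\rangle\|\leq\|B\|\,\|x\|^{2}$, so the right-hand scalar bound holds with constant $\|B\|$. For the lower bound I would use that the frame operator $S$ is positive and invertible; then the operator inequality $\|S^{-1}\|^{-1}\,\id\leq S$ yields $\|S^{-1}\|^{-1}\langle x,x\rangle\leq\langle Sx,x\rangle$, and taking norms of these positive elements gives $\|S^{-1}\|^{-1}\|x\|^{2}\leq\|\langle Sx,x\rangle\|$, i.e.\ the left-hand scalar bound with constant $\|S^{-1}\|^{-1}$.

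For the converse I would start from the scalar inequality. The upper estimate $\|\langle Sx,x\rangle\|\leq B\|x\|^{2}$ shows that $S$ is a bounded positive operator with $\|S\|\leq B$, whence $\langle Sx,x\rangle\leq\|S\|\langle x,x\rangle\leq B\langle x,x\rangle$, recovering the right inequality of \eqref{2.1} with scalar bound $B\,\id$. The lower estimate is the delicate part: from $A\|x\|^{2}\leq\|\langle Sx,x\rangle\|=\|S^{1/2}x\|^{2}$ I obtain $\|S^{1/2}x\|\geq\sqrt{A}\,\|x\|$, so the self-adjoint operator $S^{1/2}$ is bounded below. By Lemma \ref{sb} this forces $S^{1/2}$ to be surjective, and being bounded below it is injective with closed range; hence $S^{1/2}$, and therefore $S=(S^{1/2})^{2}$, is invertible, so Lemma \ref{3} applies. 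The positivity and invertibility of $S$ then give $\|S^{-1}\|^{-1}\langle x,x\rangle\leq\langle Sx,x\rangle$, which is the left inequality of \eqref{2.1} with $A'=\|S^{-1}\|^{-1}\,\id$.

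The passage to the upper bounds in both directions is routine and amounts to norm monotonicity and submultiplicativity. The main obstacle is the lower bound in the converse: a scalar lower estimate on $\|\langle Sx,x\rangle\|$ is a priori strictly weaker than an $\mathcal{A}$-valued lower estimate, and the two are reconciled only by proving that $S$ is invertible. This is precisely where Lemma \ref{sb} (bounded below $\Leftrightarrow$ surjective) and Lemma \ref{3} (invertibility of $T^{\ast}T$) carry the essential weight; without the invertibility of $S$ one cannot convert the norm bound into the operator inequality $A'\langle x,x\rangle\leq\langle Sx,x\rangle$, because the non-commutativity of $\mathcal{A}$ prevents a direct comparison of $\|A\langle x,x\rangle\|$ with $\|x\|^{2}$.
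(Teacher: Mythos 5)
Your argument is correct and follows essentially the same route the paper itself uses for the controlled analogue (Theorem \ref{spp}), the statement here being only quoted from \cite{MRKAN}: take norms of positive elements for the forward direction, and in the converse write $\langle Sx,x\rangle=\langle S^{1/2}x,S^{1/2}x\rangle$ so that the scalar lower bound makes $S^{1/2}$ bounded below, whence Lemma \ref{sb} upgrades it to the $\mathcal{A}$-valued inequality. The one point where you are more careful than the paper is the lower bound in the forward direction, where the bounds $A,B$ of \eqref{2.1} are elements of $\mathcal{A}$ rather than scalars, so one cannot simply ``take norms''; your detour through the invertibility of $S$ and $\|S^{-1}\|^{-1}\langle x,x\rangle\leq\langle Sx,x\rangle$ correctly closes that gap.
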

\begin{definition}
	Let $C,C^{'} \in GL^{+}(U)$, we call $\Lambda= \{\Lambda_{w}\in End_{\mathcal{A}}^{\ast}(U,V_{w}): w\in\Omega\}$ a $(C-C^{'})$-controlled continuous g-frame for Hilbert $C^{\ast}$-module $U$ with respect to  $\{V_{w}: w\in\Omega\}$ if $\Lambda$ is continuous g-Bessel sequence and there exist two constants $A > 0$ and $B<\infty$ such that :
	\begin{equation}
		A\langle x,x\rangle \leq\int_{\Omega}\langle\Lambda_{w}Cx,\Lambda_{w}C^{'}x\rangle d\mu(w)\leq B\langle x,x\rangle , \forall x\in U.
	\end{equation}
	$A$ and $B$ are called the $(C-C^{'})$-controlled continuous g-frames bounds.\\
	If $C^{'}=I$ then we call $\Lambda$ a $C$-controlled continuous g-frames for $U$ with respect to  $\{V_{w}: w\in\Omega\}$.\\
\end{definition}

Let $\Lambda= \{\Lambda_{w}\in End_{\mathcal{A}}^{\ast}(U,V_{w}): w\in\Omega\}$ be a continuous g-frames for $U$ with respect to $\{V_{w}: w\in\Omega\}$.\\
The bounded linear operator  $T_{CC^{'}}:l^{2}(\{V_{w}\}_{w\in\Omega}) \rightarrow U$ given by
\begin{equation*}
T_{CC^{'}}(\{y_{w}\}_{w\in\Omega})=\int_{\Omega}(CC^{'})^{\frac {1}{2}}\Lambda^{\ast}_{\omega}y_{\omega}d\mu(w) \qquad  \forall \{y_{w}\}_{w\in\Omega} \in l^{2}(\{V_{w}\}_{w\in\Omega})
\end{equation*}
is called the synthesis operator for the  $(C-C^{'})$-controlled continuous g-frame $ \{\Lambda_{w}\}_{ w\in\Omega}$.\\
The adjoint operator $T^{\ast}_{CC^{'}}: U\rightarrow l^{2}(\{V_{w}\}_{w\in\Omega})$ given by
\begin{equation}\label{2..3}
T^{\ast}_{CC^{'}}(x)=\{\Lambda_{\omega}(C^{'}C)^{\frac{1}{2}}x\}_{\omega \in \Omega} \qquad \forall x\in U
\end{equation}
is called the analysis operator for the  $(C-C^{'})$-controlled continuous g-frame $ \{\Lambda_{w} w\in\Omega\}$.\\
When $C$ and $C^{'}$ commute with each other, and commute with the operator $\Lambda^{\ast}_{\omega}\Lambda_{\omega}$ for each $\omega \in \Omega$, then the $(C-C^{'})$-controlled continuous g-frames operator: \\

$S_{CC^{'}}:U\longrightarrow U$ is defined as: 
$S_{CC^{'}}x=T_{CC^{'}}T^{\ast}_{CC^{'}}x=\int_{\Omega}C^{'}\Lambda^{\ast}_{w}\Lambda_{w}Cx d\mu(w)$\\
From now on we assume that $C$ and $C^{'}$ commute with each other, and commute with the operator $\Lambda^{\ast}_{\omega}\Lambda_{\omega}$ for each $\omega \in \Omega$
\begin{proposition}
The $(C-C^{'})$-controlled continuous g-frames operator $
S_{CC^{'}}$ is bounded, positive, sefladjoint and invertible.
\end{proposition}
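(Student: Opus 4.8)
The plan is to deduce all four properties from the factorization $S_{CC'}=T_{CC'}T^{\ast}_{CC'}$ together with the controlled frame inequality, reading the latter as a two-sided bound on the quadratic form $\langle S_{CC'}x,x\rangle$. First I would record the identity that links the defining inequality to the operator: since $C'$ is positive (hence self-adjoint) one has
\begin{equation*}
\langle \Lambda_w Cx,\Lambda_w C'x\rangle=\langle C'\Lambda^{\ast}_w\Lambda_w Cx,x\rangle,
\end{equation*}
so that integrating over $\Omega$ gives $\int_\Omega\langle\Lambda_w Cx,\Lambda_w C'x\rangle d\mu(w)=\langle S_{CC'}x,x\rangle$. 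Consequently the $(C-C')$-controlled frame bounds translate into
\begin{equation*}
A\langle x,x\rangle\le\langle S_{CC'}x,x\rangle\le B\langle x,x\rangle,\qquad\forall x\in U.
\end{equation*}
I would also check that the two written expressions for $S_{CC'}$ agree: expanding $T_{CC'}T^{\ast}_{CC'}x=\int_\Omega (CC')^{1/2}\Lambda^{\ast}_w\Lambda_w(C'C)^{1/2}x\,d\mu(w)$ and using that $C,C'$ commute --- so $(CC')^{1/2}=(C'C)^{1/2}$ commutes with $\Lambda^{\ast}_w\Lambda_w$ --- collapses this to $\int_\Omega C'\Lambda^{\ast}_w\Lambda_w Cx\,d\mu(w)$.

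For boundedness, self-adjointness and positivity I would argue directly from the factorization. Boundedness is immediate because $T_{CC'}$ is a bounded operator, so the composition $S_{CC'}=T_{CC'}T^{\ast}_{CC'}$ is bounded. Self-adjointness follows from $(T_{CC'}T^{\ast}_{CC'})^{\ast}=T_{CC'}T^{\ast}_{CC'}$; alternatively one can verify it at the level of the integrand, where the commutativity of $C,C'$ with each other and with $\Lambda^{\ast}_w\Lambda_w$ makes $C'\Lambda^{\ast}_w\Lambda_w C$ self-adjoint. Positivity is clear from $\langle S_{CC'}x,x\rangle=\langle T^{\ast}_{CC'}x,T^{\ast}_{CC'}x\rangle\ge0$, and is in any case forced by the lower bound $\langle S_{CC'}x,x\rangle\ge A\langle x,x\rangle\ge 0$.

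The only genuine work is invertibility, and here I would invoke the two lemmas. The lower frame bound rewritten as $\langle T^{\ast}_{CC'}x,T^{\ast}_{CC'}x\rangle=\langle S_{CC'}x,x\rangle\ge A\langle x,x\rangle$ with $A>0$ says precisely that $T^{\ast}_{CC'}$ is bounded below with respect to the inner product, so Lemma \ref{sb} yields that $T_{CC'}$ is surjective. Lemma \ref{3}(ii) then gives at once that $T_{CC'}T^{\ast}_{CC'}=S_{CC'}$ is invertible (with the estimates $\|S_{CC'}^{-1}\|^{-1}\le S_{CC'}\le\|T_{CC'}\|^{2}$ for free). The main obstacle is not any single deduction but making sure the hypotheses of the lemmas are met in the module setting: one must confirm that $A$ is a genuine positive scalar bound, that the inner-product lower bound passes correctly through the identification $\langle S_{CC'}x,x\rangle=\langle T^{\ast}_{CC'}x,T^{\ast}_{CC'}x\rangle$, and that all the rearrangements of $C$, $C'$ and $\Lambda^{\ast}_w\Lambda_w$ under the integral are licensed by the standing commutativity assumption.
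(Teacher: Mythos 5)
Your argument is correct, and it reaches the same two-sided quadratic-form bound $A\langle x,x\rangle\le\langle S_{CC'}x,x\rangle\le B\langle x,x\rangle$ that drives the paper's proof, but the way you extract invertibility is genuinely different. The paper stays entirely at the level of the operator inequality: from the controlled frame identity it writes $A\cdot Id_{U}\le S_{CC'}\le B\cdot Id_{U}$ and reads off positivity, boundedness and invertibility directly (the last step implicitly using the standard $C^{\ast}$-algebra fact that a self-adjoint element dominating $A\cdot Id$ with $A>0$ is invertible), and it gets self-adjointness from positivity rather than from any factorization. You instead route everything through $S_{CC'}=T_{CC'}T^{\ast}_{CC'}$: boundedness and self-adjointness come for free from the factorization, and invertibility is obtained by reading the lower frame bound as $\langle T^{\ast}_{CC'}x,T^{\ast}_{CC'}x\rangle\ge A\langle x,x\rangle$, invoking Lemma \ref{sb} to conclude $T_{CC'}$ is surjective, and then Lemma \ref{3}(ii) to conclude $T_{CC'}T^{\ast}_{CC'}$ is invertible. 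Your route is longer but has the merit of actually using the two preliminary lemmas the paper sets up (which the paper's own proof of this proposition never touches), of supplying the explicit bounds $\|S_{CC'}^{-1}\|^{-1}\le S_{CC'}\le\|T_{CC'}\|^{2}$, and of verifying that the two displayed expressions for $S_{CC'}$ agree under the standing commutativity hypothesis --- a point the paper takes for granted. The paper's route is shorter and avoids needing $T_{CC'}$ at all, at the cost of leaving the ``bounded below implies invertible'' step as an appeal to elementary spectral theory in $End_{\mathcal{A}}^{\ast}(U)$.
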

\begin{proof}\textcolor{white}{.}
	We show that $S_{CC^{'}}$ is a bounded operator:
	\begin{equation*}
\|S_{CC^{'}}\|=\underset{x \in U, \|x\|\leq1}{\sup}\|\langle S_{CC^{'}}x,x\rangle \|=\underset{x \in U, \|x\|\leq1}{\sup}\int_{\Omega}C^{'}\Lambda^{\ast}_{w}\Lambda_{w}Cx d\mu(w)\|\leq B
	\end{equation*}
From the $(C-C^{'})$-controlled continuous g-frames identity (2.2), we have:
\begin{equation*}
	A\langle x,x\rangle \leq \langle S_{CC^{'}}x,x\rangle \leq B\langle x,x\rangle
\end{equation*} 
so
\begin{equation*}
A.Id_{U} \leq S_{CC^{'}} \leq B.Id_{U}
\end{equation*} 
Where $Id_{U}$ is the identity operator in $U$.\\
We clearly see that $S_{CC^{'}}$ is a positive operator.\\
Thus the $(C-C^{'})$-controlled continuous g-frames operator  $S_{CC^{'}}$is bounded and invertible\\
In other hand we know every positive operator is self adjoint.
\end{proof}
\begin{theorem}\label{spp}
Let $\Lambda= \{\Lambda_{w}\in End_{\mathcal{A}}^{\ast}(U,V_{w}): w\in\Omega\}$ is $(C-C^{'})$-controlled continuous g-bessel sequence for $U$, then $\Lambda$ is a $(C-C^{'})$-controlled continuous g-frames for $U$ with respect to $\{V_{w}: w\in\Omega\}$ if and only if there exist a positive constants $A$ and $B$ such that : 
\begin{equation} \label{2.3}
	A\|x\|^{2} \leq\| \int_{\Omega}\langle\Lambda_{w}Cx,\Lambda_{w}C^{'}x\rangle d\mu(w)\| \leq B\|x\|^{2}  \qquad  \forall x\in U.
\end{equation}
\end{theorem}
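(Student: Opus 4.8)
The plan is to observe that the middle term of \eqref{2.3} is nothing but $\langle S_{CC^{'}}x,x\rangle$. Using that $C^{'}$ is self-adjoint and that $C,C^{'}$ commute with each other and with $\Lambda_{w}^{\ast}\Lambda_{w}$, one checks
$$\int_{\Omega}\langle\Lambda_{w}Cx,\Lambda_{w}C^{'}x\rangle d\mu(w)=\int_{\Omega}\langle C^{'}\Lambda_{w}^{\ast}\Lambda_{w}Cx,x\rangle d\mu(w)=\langle S_{CC^{'}}x,x\rangle .$$
Moreover each integrand rewrites as $\langle\Lambda_{w}(C^{'}C)^{1/2}x,\Lambda_{w}(C^{'}C)^{1/2}x\rangle\geq 0$, so $S_{CC^{'}}$ is positive, hence self-adjoint. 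Thus \eqref{2.3} reads $A\|x\|^{2}\leq\|\langle S_{CC^{'}}x,x\rangle\|\leq B\|x\|^{2}$ while the controlled frame inequality (2.2) reads $A\langle x,x\rangle\leq\langle S_{CC^{'}}x,x\rangle\leq B\langle x,x\rangle$.

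For the forward implication I would simply take norms in (2.2). Since $0\leq A\langle x,x\rangle\leq\langle S_{CC^{'}}x,x\rangle\leq B\langle x,x\rangle$, the norm is monotone on positive elements of $\mathcal{A}$, and $A,B$ are scalars with $\|\langle x,x\rangle\|=\|x\|^{2}$, this delivers \eqref{2.3} immediately.

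The substantial direction is the converse, and there the upper bound is free: the controlled g-Bessel hypothesis already supplies $\langle S_{CC^{'}}x,x\rangle\leq B\langle x,x\rangle$. The real work, and the main obstacle, is to upgrade the norm lower bound $A\|x\|^{2}\leq\|\langle S_{CC^{'}}x,x\rangle\|$ into the $\mathcal{A}$-valued lower bound $A^{'}\langle x,x\rangle\leq\langle S_{CC^{'}}x,x\rangle$, since norm estimates do not in general transfer to operator inequalities in a Hilbert $C^{\ast}$-module. My route is: first apply Cauchy--Schwarz, $\|\langle S_{CC^{'}}x,x\rangle\|\leq\|S_{CC^{'}}x\|\,\|x\|$, to deduce $A\|x\|\leq\|S_{CC^{'}}x\|$, so that $S_{CC^{'}}$ is bounded below in norm. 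As $S_{CC^{'}}$ is self-adjoint, Lemma \ref{sb} applied with $T=T^{\ast}=S_{CC^{'}}$ forces $S_{CC^{'}}$ to be surjective; being bounded below it is also injective, hence invertible in $End_{\mathcal{A}}^{\ast}(U)$. Finally, writing $S_{CC^{'}}=S_{CC^{'}}^{1/2}(S_{CC^{'}}^{1/2})^{\ast}$ with $S_{CC^{'}}^{1/2}$ surjective and invoking Lemma \ref{3}(ii), I obtain $\|S_{CC^{'}}^{-1}\|^{-1}\,Id_{U}\leq S_{CC^{'}}$, which is exactly the lower frame bound with $A^{'}=\|S_{CC^{'}}^{-1}\|^{-1}$. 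Combining this with the Bessel upper bound gives (2.2) and completes the converse.
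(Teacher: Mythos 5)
Your proof is correct and follows essentially the same route as the paper: both reduce the converse to showing that the norm lower bound in \eqref{2.3} forces the positive operator $S_{CC'}$ to be bounded below as an operator, by factoring through its square root and invoking the surjectivity/bounded-below equivalence. The only cosmetic difference is that the paper reads off $\| \langle S_{CC'}x,x\rangle \| = \|S_{CC'}^{1/2}x\|^{2}$ and applies Lemma \ref{sb} directly to $S_{CC'}^{1/2}$, whereas you detour through Cauchy--Schwarz, surjectivity and invertibility of $S_{CC'}$ itself, and then Lemma \ref{3}(ii); this is slightly longer but equally valid.
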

\begin{proof}
Let $\{\Lambda_{w}\}_{w\in\Omega}$ be a $(C-C^{'})$-controlled continuous g-frames for $U$ with respect to $\{V_{w}: w\in\Omega\}$ with bounds $A$ and $B$.\\
Hence, we have 
\begin{equation}\label {2.4}
A\langle x,x\rangle \leq \int_{\Omega}\langle\Lambda_{w}Cx,\Lambda_{w}C^{'}x\rangle d\mu(w) \leq B\langle x,x\rangle  \qquad \forall x\in U.
\end{equation}
Since $0 \leq \langle x,x\rangle$, $ \forall x\in U $ , then we can take the norme in the left, middle and right termes of the above inequality \eqref{2.4}.\\
Thus we have:
\begin{equation*}
\|A\langle x,x\rangle\| \leq \| \int_{\Omega}\langle\Lambda_{w}Cx,\Lambda_{w}C^{'}x\rangle d\mu(w)\| \leq \|B\langle x,x\rangle\|  \qquad \forall x\in U.
\end{equation*}
So,
\begin{equation*}
A\|x\|^{2} \leq \| \int_{\Omega}\langle\Lambda_{w}Cx,\Lambda_{w}C^{'}x\rangle d\mu(w)\| \leq B\|x\|^{2} \qquad \forall x\in U.
\end{equation*}
Conversely, suppose that \eqref{2.3} holds,we have:\\
\begin{equation}\label{2.5}
\langle S^{\frac{1}{2}}_{CC^{'}}x,S^{\frac{1}{2}}_{CC^{'}}x\rangle=\langle S_{CC^{'}}x,x\rangle=\int_{\Omega}\langle\Lambda_{w}Cx,\Lambda_{w}C^{'}x\rangle d\mu(w)
\end{equation}
using \eqref{2.5} in \eqref{2.4}, we get for all $x\in U$:
\begin{equation*}
\sqrt{A}\|x\| \leq \|S^{\frac{1}{2}}_{CC^{'}}x\|\leq \sqrt{B}\|x\| 
\end{equation*}
by lemma \ref{sb} $\exists m, M > 0 $ such that :
\begin{equation*}
 m\langle x,x\rangle \leq \langle S^{\frac{1}{2}}_{CC^{'}}x,S^{\frac{1}{2}}_{CC^{'}}x\rangle \leq M\langle x,x\rangle
\end{equation*}
Therefore $\{\Lambda_{w} : w\in\Omega\}$ is a $(C-C^{'})$-controlled continuous g-frames for $U$ with respect to  $\{V_{w}: w\in\Omega\}$ 
\end{proof}
\begin{theorem}
Let $C \in GL^{+}(U)$, the sequence $\Lambda= \{\Lambda_{w}\in End_{\mathcal{A}}^{\ast}(U,V_{w}): w\in\Omega\}$ is a continuous g-frame for $U$ with respect to $\{V_{w}: w\in\Omega\}$ if and only if $\Lambda$ is a  $(C-C)$-controlled continuous g-frames for $U$ with respect to $\{V_{w}: w\in\Omega\}$  
\end{theorem}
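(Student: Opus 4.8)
The plan is to prove both implications by the substitution $x\mapsto Cx$ (and its inverse $x\mapsto C^{-1}x$), which reduces the $(C-C)$-controlled inequality to the ordinary continuous g-frame inequality once we control the ratio between $\langle Cx,Cx\rangle$ and $\langle x,x\rangle$. Note first that for $C'=C$ the middle term of the controlled inequality is exactly $\int_{\Omega}\langle\Lambda_{w}Cx,\Lambda_{w}Cx\rangle d\mu(w)$, which is what the substitution will produce.

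The key estimate I would record at the outset is a two-sided operator comparison: since $C\in GL^{+}(U)$ is bounded with bounded inverse, Lemma \ref{1} applied to $T=C$ gives $\langle Cx,Cx\rangle\leq\|C\|^{2}\langle x,x\rangle$, while Lemma \ref{1} applied to $T=C^{-1}$ at the vector $Cx$ gives $\langle x,x\rangle\leq\|C^{-1}\|^{2}\langle Cx,Cx\rangle$. Together these read
\[
\|C^{-1}\|^{-2}\langle x,x\rangle\leq\langle Cx,Cx\rangle\leq\|C\|^{2}\langle x,x\rangle,\qquad\forall x\in U.
\]
Because $C^{-1}$ again lies in $GL^{+}(U)$, the identical estimate holds verbatim with $C$ replaced by $C^{-1}$, which is all the converse direction will need.

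For the forward direction, assume $\Lambda$ is a continuous g-frame with bounds $A,B$. Replacing $x$ by $Cx$ in \eqref{2.1} yields
\[
A\langle Cx,Cx\rangle\leq\int_{\Omega}\langle\Lambda_{w}Cx,\Lambda_{w}Cx\rangle d\mu(w)\leq B\langle Cx,Cx\rangle,
\]
and sandwiching $\langle Cx,Cx\rangle$ by the key estimate produces the $(C-C)$-controlled frame inequality with bounds $A\|C^{-1}\|^{-2}$ and $B\|C\|^{2}$; the continuous g-Bessel requirement is precisely the resulting upper bound, and measurability of $w\mapsto\Lambda_{w}Cx$ is immediate.

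For the converse, assume $\Lambda$ is $(C-C)$-controlled with bounds $A,B$. Since $C$ is invertible I replace $x$ by $C^{-1}x$; as $\Lambda_{w}C\,C^{-1}x=\Lambda_{w}x$, the middle term collapses to $\int_{\Omega}\langle\Lambda_{w}x,\Lambda_{w}x\rangle d\mu(w)$, and bounding $\langle C^{-1}x,C^{-1}x\rangle$ by the key estimate (applied to $C^{-1}$) recovers the continuous g-frame inequality \eqref{2.1} with bounds $A\|C\|^{-2}$ and $B\|C^{-1}\|^{2}$. The proof is essentially bookkeeping; the only point demanding care is that the two-sided comparison of $\langle Cx,Cx\rangle$ with $\langle x,x\rangle$ must be extracted from Lemma \ref{1} twice—once for $C$ and once for $C^{-1}$—rather than assumed, since the operator inequality $C\geq\|C^{-1}\|^{-1}\mathrm{Id}_{U}$ is not directly among the stated hypotheses.
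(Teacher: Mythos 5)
Your proposal is correct and follows essentially the same route as the paper: both directions are handled by the substitution $x\mapsto Cx$ (resp.\ $x\mapsto C^{-1}x$) combined with the two-sided comparison $\|C^{-1}\|^{-2}\langle x,x\rangle\leq\langle Cx,Cx\rangle\leq\|C\|^{2}\langle x,x\rangle$, and you arrive at the same bounds ($A\|C^{-1}\|^{-2}$, $B\|C\|^{2}$ in one direction and $A\|C\|^{-2}$, $B\|C^{-1}\|^{2}$ in the other). The only cosmetic difference is that the paper runs one direction through the norm characterization of Theorem \ref{spp} while you stay with the $\mathcal{A}$-valued inequalities throughout; the substance is identical.
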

\begin{proof}
Suppose that $\{\Lambda_{w} : w\in\Omega\}$ is $(C-C)$-controlled continuous g-frames with bounds $A$ and $B$, then :
\begin{equation*}
A\|x\|^{2} \leq\| \int_{\Omega}\langle\Lambda_{w}Cx,\Lambda_{w}Cx\rangle d\mu(w)\| \leq B\|x\|^{2}  \qquad \forall x\in U.
\end{equation*}
For any $x\in U$, we have:
\begin{align*}
A\|x\|^{2}=A\|CC^{-1}x\|^{2}&\leq A\|C\|^{2}\|C^{-1}x\|^{2}\\
&\leq \|C\|^{2}\|\int_{\Omega}\langle\Lambda_{w}CC^{-1}x,\Lambda_{w}CC^{-1}x\rangle d\mu(w)\|\\
&=\|C\|^{2}\|\int_{\Omega}\langle\Lambda_{w}x,\Lambda_{w}x\rangle d\mu(w)\|\\
\end{align*}
hence,
\begin{equation}\label{a}
A\|C\|^{-2}\|x\|^{2}\leq \|\int_{\Omega}\langle\Lambda_{w}x,\Lambda_{w}x\rangle d\mu(w)\|
\end{equation}
on the other hand :
\begin{equation*}
\|\int_{\Omega}\langle\Lambda_{w}x,\Lambda_{w}x\rangle d\mu(w)\|=\|\int_{\Omega}\langle\Lambda_{w}CC^{-1}x,\Lambda_{w}CC^{-1}x\rangle d\mu(w)\|
\end{equation*}
\begin{equation}\label{b}
\|\int_{\Omega}\langle\Lambda_{w}x,\Lambda_{w}x\rangle d\mu(w)\|
\leq B\|C^{-1}x\|^{2}\leq B\|C^{-1}\|^{2}\|x\|^{2}
\end{equation}
From \eqref{a} and \eqref{b} and theorem\ref{ssp} we conclude that $\{\Lambda_{w}, w\in\Omega\}$ is a continuous g-frame with bounds $A\|C\|^{-2}$ and $B\|C^{-1}\|^{2}$\\
Conversely, let $\{\Lambda_{w}, w\in\Omega\}$ is a continuous g-frame with bounds $E$ and $F$, then for all $x\in U$ we have :\\
\begin{equation*}
E\langle x,x\rangle \leq\int_{\Omega}\langle\Lambda_{w}x,\Lambda_{w}x\rangle d\mu(w)\leq F\langle x,x\rangle 
\end{equation*}
So, for all $x\in U$, $Cx\in U$, and :
\begin{equation}\label{c}
\int_{\Omega}\langle\Lambda_{w}Cx,\Lambda_{w}Cx\rangle d\mu(w)\leq F\langle Cx,Cx\rangle \leq F\|C\|^{2}\langle x,x\rangle
\end{equation}
Also, for all $x\in U$,
\begin{equation*}
E\langle x,x\rangle=E\langle C^{-1}Cx,C^{-1}Cx\rangle\leq E\|C^{-1}\|^{2}\langle Cx,Cx\rangle
\end{equation*}
then,
\begin{equation}\label{cc}
E\langle x,x\rangle\leq \|C^{-1}\|^{2}\int_{\Omega}\langle\Lambda_{w}Cx,\Lambda_{w}Cx\rangle d\mu(w)
\end{equation}
From \eqref{c} and \eqref{cc}, we have:
\begin{equation*}
E\|C^{-1}\|^{-2}\langle x,x\rangle\leq \int_{\Omega}\langle\Lambda_{w}Cx,\Lambda_{w}Cx\rangle d\mu(w)\leq F\|C\|^{2}\langle x,x\rangle
\end{equation*}
Hence $\Lambda$ is a  $(C-C)$-controlled continuous g-frames with bounds  $E\|C^{-1}\|^{-2}$ and $F\|C\|^{2}$

\end{proof}
\begin{proposition}
Let $\{\Lambda_{w}, w\in\Omega\}$ is a continuous g-frame for $U$ with respect to $\{V_{w}: w\in\Omega\}$ and $S$ the continuous g-frame operator associated. Let $C,C^{'} \in GL^{+}(U)$, then $\{\Lambda_{w}, w\in\Omega\}$ is  $(C-C^{'})$-controlled continuous g-frames	
\end{proposition}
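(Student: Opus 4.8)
The plan is to express the controlled quantity $\int_{\Omega}\langle\Lambda_{w}Cx,\Lambda_{w}C^{'}x\rangle\,d\mu(w)$ through the ordinary continuous g-frame operator $S$ and then transfer the frame bounds of $\Lambda$ across the positive invertible operators $C,C^{'}$. First I would use the standing commutativity hypothesis: for each $w$,
\begin{equation*}
\langle\Lambda_{w}Cx,\Lambda_{w}C^{'}x\rangle=\langle C^{'}\Lambda^{\ast}_{w}\Lambda_{w}Cx,x\rangle=\langle CC^{'}\Lambda^{\ast}_{w}\Lambda_{w}x,x\rangle,
\end{equation*}
where I use that $C^{'}$ is self-adjoint (being positive), that $C$ commutes with $\Lambda^{\ast}_{w}\Lambda_{w}$, and that $C$ and $C^{'}$ commute. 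Integrating and pulling the bounded operator $CC^{'}$ out of the Bochner integral gives the key identity $\int_{\Omega}\langle\Lambda_{w}Cx,\Lambda_{w}C^{'}x\rangle\,d\mu(w)=\langle CC^{'}Sx,x\rangle=\langle S_{CC^{'}}x,x\rangle$.

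Next, since $C,C^{'}$ and $S$ are positive and pairwise commuting (the relations $CS=SC$ and $C^{'}S=SC^{'}$ follow from $C,C^{'}$ commuting with each $\Lambda^{\ast}_{w}\Lambda_{w}$ and the definition of $S$), the operator $CC^{'}$ is positive and invertible, so $(CC^{'})^{\frac{1}{2}}$ exists and commutes with $S$. This lets me factor $CC^{'}S=(CC^{'})^{\frac{1}{2}}S(CC^{'})^{\frac{1}{2}}$ and write, for $y:=(CC^{'})^{\frac{1}{2}}x$,
\begin{equation*}
\langle CC^{'}Sx,x\rangle=\langle S(CC^{'})^{\frac{1}{2}}x,(CC^{'})^{\frac{1}{2}}x\rangle=\langle Sy,y\rangle.
\end{equation*}
Applying the continuous g-frame inequality of $\Lambda$ to $y$ gives $E\langle y,y\rangle\leq\langle Sy,y\rangle\leq F\langle y,y\rangle$, where $E,F$ are the frame bounds of $\Lambda$, and $\langle y,y\rangle=\langle(CC^{'})^{\frac{1}{2}}x,(CC^{'})^{\frac{1}{2}}x\rangle=\langle CC^{'}x,x\rangle$.

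The final step is to sandwich $\langle CC^{'}x,x\rangle$ between scalar multiples of $\langle x,x\rangle$. For the upper bound, Lemma \ref{1} applied to $(CC^{'})^{\frac{1}{2}}$ yields $\langle CC^{'}x,x\rangle\leq\|CC^{'}\|\langle x,x\rangle$. For the lower bound, writing $x=(CC^{'})^{-\frac{1}{2}}(CC^{'})^{\frac{1}{2}}x$ and applying Lemma \ref{1} to $(CC^{'})^{-\frac{1}{2}}$ gives $\langle x,x\rangle\leq\|(CC^{'})^{-1}\|\,\langle CC^{'}x,x\rangle$, hence $\langle CC^{'}x,x\rangle\geq\|(CC^{'})^{-1}\|^{-1}\langle x,x\rangle$. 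Combining these estimates produces
\begin{equation*}
E\|(CC^{'})^{-1}\|^{-1}\langle x,x\rangle\leq\int_{\Omega}\langle\Lambda_{w}Cx,\Lambda_{w}C^{'}x\rangle\,d\mu(w)\leq F\|CC^{'}\|\langle x,x\rangle,
\end{equation*}
so $\Lambda$ is a $(C-C^{'})$-controlled continuous g-frame with bounds $A=E\|(CC^{'})^{-1}\|^{-1}$ and $B=F\|CC^{'}\|$; the right-hand inequality simultaneously records the required $(C-C^{'})$-controlled g-Bessel property.

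I expect the main obstacle to be the bookkeeping around commutativity and positivity rather than any estimate: one must justify that $CC^{'}$ is positive (it is, being a product of commuting positive operators) so that $(CC^{'})^{\frac{1}{2}}$ is available, and that this square root commutes with $S$, in order to legitimize both the factorization $CC^{'}S=(CC^{'})^{\frac{1}{2}}S(CC^{'})^{\frac{1}{2}}$ and the positivity of the middle term that makes the $\mathcal{A}$-valued inequalities meaningful. Once these structural points are secured, everything reduces to two applications of Lemma \ref{1}.
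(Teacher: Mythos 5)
Your proof is correct, and it takes a genuinely different --- and in fact tighter --- route than the paper's. The paper argues through the norm characterization of Theorem \ref{spp}: it asserts the identity \eqref{2.11}, namely $\|\int_{\Omega}\langle\Lambda_{w}Cx,\Lambda_{w}C^{'}x\rangle d\mu(w)\|=\|C\|\,\|C^{'}\|\,\|\langle Sx,x\rangle\|$, and reads off the bounds $A\|C\|\,\|C^{'}\|$ and $B\|C\|\,\|C^{'}\|$. That identity is not justified (one cannot extract $\|C\|\,\|C^{'}\|$ as an exact factor from inside the inner products), and the resulting lower bound has the wrong shape --- a lower frame bound should degrade through $\|(CC^{'})^{-1}\|$, not improve through $\|C\|\,\|C^{'}\|$. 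You instead work directly with the $\mathcal{A}$-valued inequality of the definition: the commutativity hypotheses let you rewrite the controlled expression as $\langle CC^{'}Sx,x\rangle=\langle S(CC^{'})^{\frac{1}{2}}x,(CC^{'})^{\frac{1}{2}}x\rangle$, and two applications of Lemma \ref{1} (to $(CC^{'})^{\frac{1}{2}}$ and to $(CC^{'})^{-\frac{1}{2}}$) transfer the frame bounds, giving the correct constants $E\|(CC^{'})^{-1}\|^{-1}$ and $F\|CC^{'}\|$. This both repairs the gap in the published argument and establishes the stronger operator-order inequality required by the definition rather than only its norm version. The one point to make explicit is that your argument genuinely uses the commutativity of $C$ and $C^{'}$ with each $\Lambda_{w}^{\ast}\Lambda_{w}$; the proposition as stated assumes only $C,C^{'}\in GL^{+}(U)$, so you should invoke the paper's standing assumption (declared just before the definition of $S_{CC^{'}}$) when you commute $C$ past $\Lambda_{w}^{\ast}\Lambda_{w}$ and when you assert that $(CC^{'})^{\frac{1}{2}}$ commutes with $S$.
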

\begin{proof}
Let $\{\Lambda_{w}, w\in\Omega\}$ is a continuous g-frame with bounds $A$ and $B$.\\by theorem \eqref{ssp} we have:
\begin{equation*}
A\|x\|^{2} \leq\| \int_{\Omega}\langle\Lambda_{w}x,\Lambda_{w}x\rangle d\mu(w)\| \leq B\|x\|^{2} \qquad \forall x\in U
\end{equation*} 
\begin{equation}\label{2.10}
\Longrightarrow A\|x\|^{2} \leq\| \langle Sx,x\rangle \| \leq B\|x\|^{2} \qquad \forall x\in U
\end{equation} 
and 
\begin{equation}\label{2.11}
\|\int_{\Omega}\langle\Lambda_{w}Cx,\Lambda_{w}C^{'}x\rangle d\mu(w)\|=\|C\| \|C^{'}\|\| \int_{\Omega}\langle\Lambda_{w}x,\Lambda_{w}x\rangle d\mu(w)\|=\|C\| \|C^{'}\|\|\langle Sx,x\rangle \|
\end{equation}
From \eqref{2.10} and \eqref{2.11}, we have :
\begin{equation*}
A\|C\| \|C^{'}\|\|x\|^{2}\leq \|\int_{\Omega}\langle\Lambda_{w}Cx,\Lambda_{w}C^{'}x\rangle d\mu(w)\|\leq B\|C\| \|C^{'}\|\|x\|^{2} \qquad \forall x\in U
\end{equation*}
we conclude by theoreme \ref{spp} that $\{\Lambda_{w}, w\in\Omega\}$ is  $(C-C^{'})$-controlled continuous g-frames	with bounds $A\|C\| \|C^{'}\|$ and $B\|C\| \|C^{'}\|$
\end{proof}
\begin{theorem}\label{sspp}
Let $\{\Lambda_{w}, w\in\Omega\} \subset End^{*}_{A}(U,V_{\omega})$ and let $C,C^{'} \in GL^{+}(U)$ so that $C,C^{'}$ commute with each other and commute with $\Lambda^{\ast}_{\omega}\Lambda_{\omega}$ for all $\omega \in \Omega$. Then the following are equivalent :\\
(1) the sequence $\{\Lambda_{w}, w\in\Omega\}$ is a $(C-C^{'})$-controlled continuous g-Bessel sequence for $U$ with respect $\{V_{\omega}\}_{\omega \in \Omega}$ with bounds $A$ and $B$\\
(2) The operator $T_{CC^{'}}:l^{2}(\{V_{w}\}_{w\in\Omega}) \rightarrow U$ given by
\begin{equation*}
T_{CC^{'}}(\{y_{w}\}_{w\in\Omega})=\int_{w\in\Omega}(CC^{'})^{\frac {1}{2}}\Lambda^{\ast}_{\omega}y_{\omega}d\mu(w) \qquad \forall \{y_{w}\}_{w\in\Omega} \in l^{2}(\{V_{w}\}_{w\in\Omega})
\end{equation*}
is well defined and bounded operator with $\|T_{CC^{'}}\|\leq \sqrt{B}$
\end{theorem}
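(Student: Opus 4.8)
The plan is to route both implications through the analysis operator $T^{\ast}_{CC'}$ of \eqref{2..3} together with the duality formula $\|z\|=\sup_{\|x\|\leq 1}\|\langle z,x\rangle\|$, which holds in every Hilbert $C^{\ast}$-module. The keystone of the argument is the pointwise-in-$x$ identity
\[
\int_{\Omega}\langle \Lambda_w(C'C)^{\frac12}x,\Lambda_w(C'C)^{\frac12}x\rangle\, d\mu(w)=\int_{\Omega}\langle\Lambda_wCx,\Lambda_wC'x\rangle\, d\mu(w),\qquad\forall x\in U,
\]
which asserts precisely that $\langle T^{\ast}_{CC'}x,T^{\ast}_{CC'}x\rangle$, the $l^{2}$-inner product of the frame coefficients, coincides with the $(C-C')$-controlled continuous g-frame sum. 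To obtain it I would fix $\omega$, move $\Lambda_\omega$ onto its adjoint to rewrite the left integrand as $\langle\Lambda_\omega^{\ast}\Lambda_\omega(C'C)^{\frac12}x,(C'C)^{\frac12}x\rangle$, then commute $(C'C)^{\frac12}$ past $\Lambda_\omega^{\ast}\Lambda_\omega$ and use its self-adjointness to merge the two half-powers into $C'C$, reaching $\langle\Lambda_\omega^{\ast}\Lambda_\omega C'Cx,x\rangle$; the right integrand collapses to the same expression after invoking self-adjointness of $C'$ and the commutativity hypothesis.

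For (1)$\Rightarrow$(2), I would first observe that the identity combined with the Bessel bound gives, upon taking norms, $\|\{\Lambda_w(C'C)^{\frac12}x\}\|^{2}=\|\int_\Omega\langle\Lambda_wCx,\Lambda_wC'x\rangle d\mu(w)\|\leq B\|x\|^{2}$, so each coefficient sequence lies in $l^{2}(\{V_w\})$ and $T^{\ast}_{CC'}$ is a well-defined operator with $\|T^{\ast}_{CC'}x\|\leq\sqrt{B}\,\|x\|$. Then, for $\{y_w\}\in l^{2}(\{V_w\})$, I would estimate $T_{CC'}\{y_w\}$ by duality, using $\langle T_{CC'}\{y_w\},x\rangle=\langle\{y_w\},T^{\ast}_{CC'}x\rangle$ (which follows from pulling the positive operator through the $\mathcal{A}$-valued inner product inside the Bochner integral, noting $(CC')^{\frac12}=(C'C)^{\frac12}$). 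The Cauchy--Schwarz inequality in $l^{2}(\{V_w\})$ then yields $\|\langle T_{CC'}\{y_w\},x\rangle\|\leq\|\{y_w\}\|\,\|T^{\ast}_{CC'}x\|\leq\sqrt{B}\,\|\{y_w\}\|\,\|x\|$, and taking the supremum over $\|x\|\leq1$ shows both that the defining integral converges to an element of $U$ and that $\|T_{CC'}\|\leq\sqrt{B}$.

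For (2)$\Rightarrow$(1), boundedness of $T_{CC'}$ with $\|T_{CC'}\|\leq\sqrt{B}$ gives $\|T^{\ast}_{CC'}\|\leq\sqrt{B}$, so Lemma \ref{1} applied to $T^{\ast}_{CC'}$ yields $\langle T^{\ast}_{CC'}x,T^{\ast}_{CC'}x\rangle\leq\|T^{\ast}_{CC'}\|^{2}\langle x,x\rangle\leq B\langle x,x\rangle$. Feeding this through the keystone identity produces exactly $\int_\Omega\langle\Lambda_wCx,\Lambda_wC'x\rangle d\mu(w)\leq B\langle x,x\rangle$, i.e. the $(C-C')$-controlled continuous g-Bessel inequality with bound $B$, closing the equivalence.

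The routine bookkeeping, namely interchanging the Bochner integral with the $\mathcal{A}$-valued inner product and applying Cauchy--Schwarz in $l^{2}(\{V_w\})$, is standard. The one genuinely delicate point will be the keystone identity: it hinges on $(C'C)^{\frac12}$ inheriting commutativity with each $\Lambda_\omega^{\ast}\Lambda_\omega$ from the standing hypothesis on $C$ and $C'$. I would justify this by continuous functional calculus, since $(C'C)^{\frac12}$ is a norm-limit of polynomials in the positive operator $C'C$ and hence commutes with everything that $C'C$ commutes with. This is the step where the commutativity assumption is indispensable, and it is the part I expect to require the most care.
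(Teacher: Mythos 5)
Your argument is correct, and the two directions sit differently relative to the paper. For $(1)\Rightarrow(2)$ you do essentially what the paper does: compute $\|T_{CC'}\{y_w\}\|$ by duality against $x$, pass the positive square root across the inner product to land on $\langle\{y_w\},\{\Lambda_\omega(CC')^{1/2}x\}\rangle$, and apply Cauchy--Schwarz together with the identity $\int_\Omega\langle\Lambda_\omega(CC')^{1/2}x,\Lambda_\omega(CC')^{1/2}x\rangle d\mu(\omega)=\int_\Omega\langle\Lambda_\omega Cx,\Lambda_\omega C'x\rangle d\mu(\omega)$ — your ``keystone identity,'' which the paper uses silently in the middle of its chain of equalities; your explicit justification via functional calculus (that $(C'C)^{1/2}$ inherits commutativity with $\Lambda_\omega^{\ast}\Lambda_\omega$ from $C'C$) is a genuine improvement in rigor. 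For $(2)\Rightarrow(1)$ you diverge: the paper fixes a finite $\Psi\subset\Omega$, writes $\int_\Psi\langle\Lambda_wCx,\Lambda_wC'x\rangle d\mu(w)$ as $\langle T_{CC'}(\{y_w\}_{w\in\Psi}),x\rangle$ with $y_w=\Lambda_w(CC')^{1/2}x$, divides through by the square root of that same quantity, and lets $\Psi$ exhaust $\Omega$ — an argument that only yields the norm inequality and moreover mixes $\mathcal{A}$-valued and scalar quantities somewhat loosely. You instead apply Lemma \ref{1} to $T^{\ast}_{CC'}$ and feed the result through the keystone identity, which delivers the operator inequality $\int_\Omega\langle\Lambda_wCx,\Lambda_wC'x\rangle d\mu(w)\leq B\langle x,x\rangle$ directly in the ordering of $\mathcal{A}$ — i.e.\ the Bessel condition in its defining form — and avoids the exhaustion argument entirely. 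The one caveat common to both your proof and the paper's is that formula \eqref{2..3} for $T^{\ast}_{CC'}$ presupposes that the bounded operator $T_{CC'}$ is adjointable with that specific adjoint (equivalently, that $\{\Lambda_\omega(C'C)^{1/2}x\}_\omega$ lies in $l^{2}(\{V_w\})$ before the Bessel bound is established); you inherit this gap from the paper rather than introduce it.
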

\begin{proof}
$(1)\Longrightarrow (2)$\\
Let  $\{\Lambda_{w}, w\in\Omega\}$ be a $(C-C^{'})$-controlled continuous g-Bessel sequence for $U$ with respect $\{V_{\omega}\}_{\omega \in \Omega}$ with bound $B$.\\
From theorem \ref{spp} we have :
\begin{equation} \label{2.12}
\|\int_{\Omega}\langle\Lambda_{w}Cx,\Lambda_{w}C^{'}x\rangle d\mu(w)\| \leq B\|x\|^{2}  \qquad \forall x\in U.
\end{equation}
For any sequence $\{y_{w}\}_{w\in \Omega} \in l^{2}(\{V_{\omega}\}_{\omega \in \Omega})$
\begin{align*}
\|T_{CC^{'}}(\{y_{w}\}_{w\in\Omega})\|^{2}&=\underset{x \in U, \|x\|=1}{\sup}\|\langle T_{CC^{'}}(\{y_{w}\}_{w\in\Omega}),x\rangle \|^{2}\\
&=\underset{x \in U, \|x\|=1}{\sup}\|\langle \int_{\Omega}(CC^{'})^{\frac {1}{2}}\Lambda^{\ast}_{\omega}y_{\omega}d\mu(w),x\rangle\|^{2}\\
&=\underset{x \in U, \|x\|=1}{\sup}\| \int_{\Omega}\langle(CC^{'})^{\frac {1}{2}}\Lambda^{\ast}_{\omega}y_{\omega},x\rangle d\mu(w)\|^{2}\\ 
&=\underset{x \in U, \|x\|=1}{\sup}\| \int_{\Omega}\langle y_{\omega},\Lambda_{\omega}(CC^{'})^{\frac {1}{2}}x\rangle d\mu(w)\|^{2}\\ 
&\leq \underset{x \in U, \|x\|=1}{\sup}\| \int_{\Omega}\langle y_{\omega},y_{\omega}\rangle d\mu(w)\| \|\int_{\Omega}\langle \Lambda_{\omega}(CC^{'})^{\frac {1}{2}}x,\Lambda_{\omega}(CC^{'})^{\frac {1}{2}}x\rangle d\mu(w)\|\\ 
&= \underset{x \in U, \|x\|=1}{\sup}\| \int_{\Omega}\langle y_{\omega},y_{\omega}\rangle d\mu(w)\| \|\int_{\Omega}\langle \Lambda_{\omega}Cx,\Lambda_{\omega}C^{'}x\rangle d\mu(w)\|\\ 
&\leq \underset{x \in U, \|x\|=1}{\sup}\| \int_{\Omega}\langle y_{\omega},y_{\omega}\rangle d\mu(w)\|B\|x\|^{2} =B\|\{y_{\omega}\}_{\omega \in \Omega}\|^{2}
\end{align*}
Then we have 
\begin{align*}
\|T_{CC^{'}}(\{y_{w}\}_{w\in\Omega})\|^{2}\leq B\|\{y_{\omega}\}_{\omega \in \Omega}\|^{2} \Longrightarrow \|T_{CC^{'}}\|\leq \sqrt{B} 
\end{align*}
we conclude the operator $T_{CC^{'}}$ is well defined and bounded\\
$(2)\Longrightarrow (1)$\\
Let the operator $T_{CC^{'}}$ is well defined, bounded and $\|T_{CC^{'}}\|\leq \sqrt{B}$ \\
For any $x\in U$ and finite subset $\Psi \subset \Omega$, we have:
\begin{align*}
\int_{\Psi}\langle\Lambda_{w}Cx,\Lambda_{w}C^{'}x\rangle d\mu(w)&=\int_{\Psi}\langle C^{'}\Lambda^{\ast}_{w}\Lambda_{w}Cx,x\rangle d\mu(w)\\
&=\int_{\Psi}\langle (CC^{'})^{\frac {1}{2}}\Lambda^{\ast}_{w}\Lambda_{w}(CC^{'})^{\frac {1}{2}}x,x\rangle d\mu(w)\\
&=\langle T_{CC^{'}}(\{y_{w}\}_{w\in\Psi}),x\rangle\\
&\leq \| T_{CC^{'}}\|\|(\{y_{w}\}_{w\in\Psi})\|\|x\|
\end{align*}
Where: $y_{w}=\Lambda_{w}(CC^{'})^{\frac {1}{2}}x$ if $\omega \in \Psi$ and $y_{w}=0$ if $\omega \notin \Psi$\\
Therefore, 
\begin{align*}
\int_{\Psi}\langle\Lambda_{w}Cx,\Lambda_{w}C^{'}x\rangle d\mu(w) &\leq \| T_{CC^{'}}\|(\int_{\Psi}\|\Lambda_{w}(CC^{'})^{\frac {1}{2}}x\|^{2}d\mu(w))^{\frac {1}{2}}\|x\|\\
&=\| T_{CC^{'}}\|(\int_{\Psi}\langle\Lambda_{w}Cx,\Lambda_{w}C^{'}x\rangle d\mu(w))^{\frac {1}{2}}\|x\|
\end{align*}
Since $\Psi$ is arbitrary, we have:
\begin{align*}
\int_{\Omega}\langle\Lambda_{w}Cx,\Lambda_{w}C^{'}x\rangle d\mu(w) &\leq \| T_{CC^{'}}\|^{2}\|x\|^{2}
\end{align*}
\begin{align*}
\Longrightarrow \int_{\Omega}\langle\Lambda_{w}Cx,\Lambda_{w}C^{'}x\rangle d\mu(w) &\leq B\|x\|^{2} \qquad as:  \| T_{CC^{'}}\|\leq \sqrt{B}
\end{align*}
Therfore $\{\Lambda_{w}, w\in\Omega\}$ is a $(C-C^{'})$-controlled continue g-Bessel sequence for $U$ with respect to $\{V_{\omega}\}_{\omega \in \Omega}$
\end{proof}
\begin{proposition}
Let  $\Lambda= \{\Lambda_{w}\in End_{\mathcal{A}}^{\ast}(U,V_{w}): w\in\Omega\}$ and  $\Gamma= \{\Gamma_{w}\in End_{\mathcal{A}}^{\ast}(U,V_{w}): w\in\Omega\}$ be two $(C-C^{'})$-controlled continue g-Bessel sequence for $U$ with respect to $\{V_{\omega}\}_{\omega \in \Omega}$ with bounds $E_{1}$ and $E_{2}$ respectively. Then the operator $L_{CC^{'}} : U \longrightarrow U$ given by:
\begin{equation}\label{2.13}
L_{CC^{'}}(x)=\int_{ \Omega}C^{'}\Gamma^{\ast}_{w}\Lambda_{\omega}Cxd\mu(w) \qquad \forall x \in U 
\end{equation}
is well defined and bounded with $\|L_{CC^{'}}\|\leq \sqrt{E_{1}E_{2}}$. Also its adjoint operator is $ L^{*}_{CC^{'}}(g)=\int_{ \Omega}C^{'}\Lambda^{\ast}_{w}\Gamma_{\omega}Cxd\mu(w)$
\end{proposition}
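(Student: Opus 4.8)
The plan is to establish the three assertions in order---well-definedness, the norm bound $\|L_{CC^{'}}\|\le\sqrt{E_{1}E_{2}}$, and the formula for $L^{\ast}_{CC^{'}}$---and the most economical route is to recognise $L_{CC^{'}}$ as a composition of controlled frame operators already analysed in Theorem \ref{sspp}. Concretely, since $C,C^{'}\in GL^{+}(U)$ are self-adjoint and commute with each other and with each $\Lambda^{\ast}_{w}\Lambda_{w}$ and $\Gamma^{\ast}_{w}\Gamma_{w}$, I would first rewrite the integrand symmetrically as $C^{'}\Gamma^{\ast}_{w}\Lambda_{w}C=(CC^{'})^{\frac{1}{2}}\Gamma^{\ast}_{w}\Lambda_{w}(CC^{'})^{\frac{1}{2}}$, which exhibits $L_{CC^{'}}=T^{\Gamma}_{CC^{'}}\,(T^{\Lambda}_{CC^{'}})^{\ast}$, the controlled synthesis operator of $\Gamma$ composed with the controlled analysis operator of $\Lambda$. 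Theorem \ref{sspp} then bounds each factor, giving at once that $L_{CC^{'}}$ is well defined, bounded, and $\|L_{CC^{'}}\|\le\|T^{\Gamma}_{CC^{'}}\|\,\|(T^{\Lambda}_{CC^{'}})^{\ast}\|\le\sqrt{E_{2}}\,\sqrt{E_{1}}$.

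For a self-contained estimate I would argue directly with the Cauchy--Schwarz inequality in the Hilbert $C^{\ast}$-module $\oplus_{w\in\Omega}V_{w}$. Using self-adjointness of $C^{'}$ and adjointability of $\Gamma_{w}$ one has, for all $x,g\in U$,
\[
\langle L_{CC^{'}}x,g\rangle=\int_{\Omega}\langle\Lambda_{w}(CC^{'})^{\frac{1}{2}}x,\Gamma_{w}(CC^{'})^{\frac{1}{2}}g\rangle\,d\mu(w),
\]
so that applying Cauchy--Schwarz to the elements $\{\Lambda_{w}(CC^{'})^{\frac{1}{2}}x\}_{w}$ and $\{\Gamma_{w}(CC^{'})^{\frac{1}{2}}g\}_{w}$ and identifying the two resulting sums with the controlled Bessel quantities yields
\[
\|\langle L_{CC^{'}}x,g\rangle\|\le\left\|\int_{\Omega}\langle\Lambda_{w}Cx,\Lambda_{w}C^{'}x\rangle\,d\mu(w)\right\|^{\frac{1}{2}}\left\|\int_{\Omega}\langle\Gamma_{w}Cg,\Gamma_{w}C^{'}g\rangle\,d\mu(w)\right\|^{\frac{1}{2}}\le\sqrt{E_{1}E_{2}}\,\|x\|\,\|g\|.
\]
Taking the supremum over $\|g\|\le1$ and using $\|L_{CC^{'}}x\|=\sup_{\|g\|\le1}\|\langle L_{CC^{'}}x,g\rangle\|$ gives the asserted norm bound.

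Finally, for the adjoint I would simply move the operators across the inner product: from $\langle L_{CC^{'}}x,g\rangle=\int_{\Omega}\langle C^{'}\Gamma^{\ast}_{w}\Lambda_{w}Cx,g\rangle\,d\mu(w)$ and self-adjointness of $C,C^{'}$ one obtains $\langle L_{CC^{'}}x,g\rangle=\int_{\Omega}\langle x,C\Lambda^{\ast}_{w}\Gamma_{w}C^{'}g\rangle\,d\mu(w)$, and the standing commutativity of $C$ and $C^{'}$ lets this be recast in the stated form $L^{\ast}_{CC^{'}}(g)=\int_{\Omega}C^{'}\Lambda^{\ast}_{w}\Gamma_{w}Cg\,d\mu(w)$. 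The step I expect to be the main obstacle is precisely the symmetrisation, i.e. passing between $C^{'}\Gamma^{\ast}_{w}\Lambda_{w}C$ and $(CC^{'})^{\frac{1}{2}}\Gamma^{\ast}_{w}\Lambda_{w}(CC^{'})^{\frac{1}{2}}$ (and the analogous interchange of $C,C^{'}$ in the adjoint): a naive Cauchy--Schwarz produces the uncontrolled sums $\int_{\Omega}\langle\Lambda_{w}Cx,\Lambda_{w}Cx\rangle\,d\mu$ rather than the controlled ones, so I would take care that every rewriting uses only the commutativity of $C,C^{'}$ with the frame operators $\Lambda^{\ast}_{w}\Lambda_{w}$ and $\Gamma^{\ast}_{w}\Gamma_{w}$ guaranteed by the standing hypotheses, which is exactly what is needed to land on the controlled Bessel bounds $E_{1},E_{2}$ and hence on the clean constant $\sqrt{E_{1}E_{2}}$.
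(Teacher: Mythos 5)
Your route is genuinely different from the paper's. The paper estimates $\|\int_{\Psi}C'\Gamma^{\ast}_{w}\Lambda_{w}Cx\,d\mu(w)\|$ by pairing with a unit vector $y$, rewriting the pairing as $\int_{\Psi}\langle\Lambda_{w}Cx,\Gamma_{w}C'y\rangle d\mu(w)$, and applying Cauchy--Schwarz \emph{in that unsymmetrised form}; this produces the uncontrolled quantities $\|\int\langle\Lambda_{w}Cx,\Lambda_{w}Cx\rangle d\mu\|$ and $\|\int\langle\Gamma_{w}C'y,\Gamma_{w}C'y\rangle d\mu\|$, which the paper then bounds by $E_{1}\|x\|^{2}$ and $E_{2}$ without further comment. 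You correctly identify this as the delicate point: the Bessel constants $E_{1},E_{2}$ control $\int\langle\Lambda_{w}Cx,\Lambda_{w}C'x\rangle d\mu$, not $\int\langle\Lambda_{w}Cx,\Lambda_{w}Cx\rangle d\mu$, so the paper's final inequality is not justified as written (one only gets it up to factors like $\|C(C')^{-1}\|$). Your symmetrised Cauchy--Schwarz, applied to $\{\Lambda_{w}(CC')^{1/2}x\}_{w}$ and $\{\Gamma_{w}(CC')^{1/2}g\}_{w}$, lands exactly on the controlled Bessel quantities via the identity $(CC')^{1/2}\Lambda^{\ast}_{w}\Lambda_{w}(CC')^{1/2}=C'\Lambda^{\ast}_{w}\Lambda_{w}C$ (licensed by the standing commutation with $\Lambda^{\ast}_{w}\Lambda_{w}$, and similarly for $\Gamma$), and therefore actually delivers the clean constant $\sqrt{E_{1}E_{2}}$. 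Your alternative factorisation $L_{CC'}=K_{CC'}T^{\ast}_{CC'}$ is the one the paper itself uses only in the \emph{following} theorem. For the adjoint formula both you and the paper perform the same transposition.

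One genuine caveat in your argument: the very first rewriting, $C'\Gamma^{\ast}_{w}\Lambda_{w}C=(CC')^{1/2}\Gamma^{\ast}_{w}\Lambda_{w}(CC')^{1/2}$ (equivalently, identifying $\int\langle\Lambda_{w}Cx,\Gamma_{w}C'g\rangle d\mu$ with $\int\langle\Lambda_{w}(CC')^{1/2}x,\Gamma_{w}(CC')^{1/2}g\rangle d\mu$), requires $C,C'$ to commute with the \emph{mixed} operator $\Gamma^{\ast}_{w}\Lambda_{w}$. The standing hypotheses only grant commutation with $\Lambda^{\ast}_{w}\Lambda_{w}$ (and, in the next theorem, with $\Gamma^{\ast}_{w}\Gamma_{w}$); commutation with the cross term does not follow from these. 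The same remark applies to your last step, where swapping $C\Lambda^{\ast}_{w}\Gamma_{w}C'$ into $C'\Lambda^{\ast}_{w}\Gamma_{w}C$ needs more than the commutativity of $C$ with $C'$. So either add the cross-commutation as a hypothesis, or keep the symmetrisation only where it acts on $\Lambda^{\ast}_{w}\Lambda_{w}$ and $\Gamma^{\ast}_{w}\Gamma_{w}$ separately (which your Cauchy--Schwarz step does) and accept that the bilinear form $\langle L_{CC'}x,g\rangle$ must be written in whichever of the two equivalent forms the definition of $L_{CC'}$ actually dictates. The paper silently makes the same identification when it factors $L_{CC'}=K_{CC'}T^{\ast}_{CC'}$ later, so this is a defect you share with the source rather than one you introduced.
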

\begin{proof}
for any $x\in U$ and $\Psi \subset \Omega$, we have :
\begin{align*}
\|\int_{ \Psi}C^{'}\Gamma^{\ast}_{w}\Lambda_{\omega}Cxd\mu(w)\|^{2}&=\underset{y \in U, \|y\|=1}{\sup}\|\langle \int_{ \Psi} C^{'}\Gamma^{\ast}_{w}\Lambda_{\omega}Cxd\mu(w),y\rangle\|^{2}\\
&=\underset{y \in U, \|y\|=1}{\sup}\|\int_{ \Psi}\langle \Lambda_{\omega}Cx,\Gamma_{w}C^{'}y\rangle d\mu(w)\|^{2}\\
&\leq  \underset{y \in U, \|y\|=1}{\sup}\|\int_{ \Psi}\langle \Lambda_{\omega}Cx,\Lambda_{\omega}Cx\rangle d\mu(w)\|\|\int_{\Psi}\langle \Gamma_{w}C^{'}y,\Gamma_{w}C^{'}y\rangle d\mu(w)\|\\
&\leq \|\int_{ \Psi}\langle \Lambda_{\omega}Cx,\Lambda_{\omega}Cx\rangle d\mu(w)\|E_{2}\\
&\leq E_{1}E_{2}\|x\|^{2}
\end{align*}
since $\Psi $ is arbitrary,  $\int_{ \Psi}C^{'}\Gamma^{\ast}_{w}\Lambda_{\omega}Cxd\mu(w)$ converge in $U$ and 
\begin{equation*}
\|L_{CC^{'}}\|=\|\int_{ \Psi}C^{'}\Gamma^{\ast}_{w}\Lambda_{\omega}Cxd\mu(w)\|\leq \sqrt{E_{1}E_{2}}
\end{equation*}
In other hand, we have:
\begin{align*}
\langle L_{CC^{'}}x,y\rangle &= \langle \int_{ \Psi}C^{'}\Gamma^{\ast}_{w}\Lambda_{\omega}Cxd\mu(w),y\rangle\\
&=\int_{ \Psi}\langle C^{'}\Gamma^{\ast}_{w}\Lambda_{\omega}Cx,y\rangle d\mu(w)\\
&=\int_{ \Psi}\langle x,C\Lambda^{\ast}_{w}\Gamma_{\omega}C^{'}y\rangle d\mu(w)\\
&=\langle x,\int_{ \Psi}C\Lambda^{\ast}_{w}\Gamma_{\omega}C^{'}y d\mu(w)\rangle
\end{align*}
Thus $L^{*}_{CC^{'}}(g)=\int_{ \Omega}C^{'}\Lambda^{\ast}_{w}\Gamma_{\omega}Cxd\mu(w)$
\end{proof}

\begin{theorem}
Let  $\Lambda= \{\Lambda_{w}\in End_{\mathcal{A}}^{\ast}(U,V_{w}): w\in\Omega\}$ be a $(C-C^{'})$-controlled continue g-frames for $U$ with respect to $\{V_{\omega}\}_{\omega \in \Omega}$ and  $\Gamma= \{\Gamma_{w}\in End_{\mathcal{A}}^{\ast}(U,V_{w}): w\in\Omega\}$  be a $(C-C^{'})$-controlled continue g-Bessel sequence for $U$ with respect to $\{V_{\omega}\}_{\omega \in \Omega}$. Assume that $C$ and $C^{'}$ commute with each other and commute with $\Gamma^{\ast}_{w}\Gamma_{w}$. If the operator $L_{CC^{'}}$ defined in \eqref{2.13} is surjective then $\Gamma= \{\Gamma_{w}: w\in\Omega\}$ is also a $(C-C^{'})$-controlled continue g-frames for $U$ with respect to $\{V_{\omega}\}_{\omega \in \Omega}$
\end{theorem}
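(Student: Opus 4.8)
The plan is to exploit the surjectivity of $L_{CC'}$ through Lemma \ref{sb} to extract a lower bound, and then to transfer that bound onto the $(C-C')$-controlled quantity of $\Gamma$ by a Cauchy--Schwarz estimate combined with the commutativity hypotheses. Since $\Gamma$ is assumed to be a $(C-C')$-controlled continuous g-Bessel sequence, the upper inequality in the norm characterization of Theorem \ref{spp} is already available, so the entire task is to produce a strictly positive lower bound $A'\|x\|^{2}\le\big\|\int_{\Omega}\langle\Gamma_{w}Cx,\Gamma_{w}C'x\rangle d\mu(w)\big\|$; once this is in hand, Theorem \ref{spp} identifies $\Gamma$ as a $(C-C')$-controlled continuous g-frame and we are done.

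First I would apply Lemma \ref{sb} to the surjective operator $L_{CC'}$: its adjoint $L^{\ast}_{CC'}$ is then bounded below, so there is $m>0$ with $\|L^{\ast}_{CC'}x\|\ge m\|x\|$ for all $x\in U$. Writing $z=L^{\ast}_{CC'}x=\int_{\Omega}C\Lambda^{\ast}_{w}\Gamma_{w}C'x\,d\mu(w)$ (the adjoint computed in the preceding proposition, cf. \eqref{2.13}) and moving the adjointable operators across the inner product gives $\langle z,z\rangle=\int_{\Omega}\langle\Gamma_{w}C'x,\Lambda_{w}Cz\rangle d\mu(w)$. The Cauchy--Schwarz inequality used in that proposition then yields
\[
\|z\|^{4}=\|\langle z,z\rangle\|^{2}\le\Big\|\int_{\Omega}\langle\Gamma_{w}C'x,\Gamma_{w}C'x\rangle d\mu(w)\Big\|\cdot\Big\|\int_{\Omega}\langle\Lambda_{w}Cz,\Lambda_{w}Cz\rangle d\mu(w)\Big\|.
\]
Because $\Lambda$ is in particular an ordinary continuous g-Bessel sequence (with some bound $B_{\Lambda}$, as required in the definition of a controlled frame), Lemma \ref{1} bounds the second factor by $B_{\Lambda}\|C\|^{2}\|z\|^{2}$; cancelling one factor of $\|z\|^{2}$ and using $\|z\|^{2}\ge m^{2}\|x\|^{2}$ produces
\[
m^{2}\|x\|^{2}\le B_{\Lambda}\|C\|^{2}\Big\|\int_{\Omega}\langle\Gamma_{w}C'x,\Gamma_{w}C'x\rangle d\mu(w)\Big\|.
\]

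The main obstacle is that Cauchy--Schwarz naturally produces the diagonal quantity $\int_{\Omega}\langle\Gamma_{w}C'x,\Gamma_{w}C'x\rangle d\mu(w)$ rather than the mixed $(C-C')$-controlled quantity the frame inequality needs, and bridging this gap is exactly where the commutativity hypotheses become indispensable. Setting $S^{\Gamma}=\int_{\Omega}\Gamma^{\ast}_{w}\Gamma_{w}\,d\mu(w)$, which commutes with $C$ and $C'$ by assumption, the diagonal quantity equals $\langle (C')^{2}S^{\Gamma}x,x\rangle$ and the mixed one equals $\langle CC'S^{\Gamma}x,x\rangle$. Since $C\ge\|C^{-1}\|^{-1}I$ and $C'\le\|C'\|I$, one has $C\ge\|C^{-1}\|^{-1}\|C'\|^{-1}C'$, and multiplying by the commuting positive operator $C'S^{\Gamma}$ gives the operator inequality $CC'S^{\Gamma}\ge\|C^{-1}\|^{-1}\|C'\|^{-1}(C')^{2}S^{\Gamma}$; taking $\mathcal{A}$-valued inner products and then norms yields $\big\|\int_{\Omega}\langle\Gamma_{w}C'x,\Gamma_{w}C'x\rangle d\mu(w)\big\|\le\|C^{-1}\|\,\|C'\|\,\big\|\int_{\Omega}\langle\Gamma_{w}Cx,\Gamma_{w}C'x\rangle d\mu(w)\big\|$. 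Substituting this into the previous display gives the desired lower bound with $A'=m^{2}/\big(B_{\Lambda}\|C\|^{2}\|C^{-1}\|\,\|C'\|\big)$, and Theorem \ref{spp} then finishes the argument. I expect the delicate points to be the justification of this diagonal-to-mixed comparison and the careful bookkeeping of commutativity needed to realize both quantities through the single positive operator $S^{\Gamma}$.
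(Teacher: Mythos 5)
Your proposal is correct, but it takes a genuinely different route from the paper. The paper never applies Lemma \ref{sb} to $L_{CC'}$ itself: it writes $L_{CC'}=K_{CC'}T^{\ast}_{CC'}$, where $K_{CC'}$ is the synthesis operator of $\Gamma$ and $T^{\ast}_{CC'}$ the analysis operator of $\Lambda$, observes that surjectivity of $L_{CC'}$ forces surjectivity of $K_{CC'}$, and then a single application of Lemma \ref{sb}(iii) to $K_{CC'}$ gives $\langle K_{CC'}K^{\ast}_{CC'}x,x\rangle=\int_{\Omega}\langle\Gamma_{w}Cx,\Gamma_{w}C'x\rangle d\mu(w)\geq m\langle x,x\rangle$ directly, i.e.\ the mixed lower bound in its $\mathcal{A}$-valued form, with no further work. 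You instead apply Lemma \ref{sb} to $L_{CC'}$, which lands you on $\|L^{\ast}_{CC'}x\|\geq m\|x\|$ and forces two extra steps: a Cauchy--Schwarz estimate (which additionally invokes the plain g-Bessel bound $B_{\Lambda}$ of $\Lambda$, legitimately available from the definition of a controlled frame) and a diagonal-to-mixed comparison $\big\|\int_{\Omega}\langle\Gamma_{w}C'x,\Gamma_{w}C'x\rangle d\mu\big\|\leq\|C^{-1}\|\|C'\|\big\|\int_{\Omega}\langle\Gamma_{w}Cx,\Gamma_{w}C'x\rangle d\mu\big\|$, which you justify correctly via the positivity of products of commuting positive operators. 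The trade-off: your argument yields only the norm-form inequality (sufficient by Theorem \ref{spp}) with the weaker constant $m^{2}/(B_{\Lambda}\|C\|^{2}\|C^{-1}\|\|C'\|)$, whereas the paper's factorization gives the operator-form bound with constant $m$ and essentially no computation; on the other hand, your diagonal-to-mixed lemma is a useful observation that the paper does not record, and your route makes explicit where each commutativity hypothesis is consumed. Both proofs rely on the adjoint formula for $L_{CC'}$ from the preceding proposition (you correctly use the version appearing in its proof rather than the misstated one in its statement), so neither saves that ingredient.
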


\begin{proof}
Let  $\Lambda= \{\Lambda_{w}\in End_{\mathcal{A}}^{\ast}(U,V_{w}): w\in\Omega\}$ be a $(C-C^{'})$-controlled continue g-frames for $U$ with respect to $\{V_{\omega}\}_{\omega \in \Omega}$.\\
by theorem \ref{sspp}, the operator $T_{CC^{'}}:l^{2}(\{V_{w}\}_{w\in\Omega}) \rightarrow U$ given by
\begin{equation*}
T_{CC^{'}}(\{y_{w}\}_{w\in\Omega})=\int_{w\in\Omega}(CC^{'})^{\frac {1}{2}}\Lambda^{\ast}_{\omega}y_{\omega}d\mu(w) \qquad  \forall \{y_{w}\}_{w\in\Omega} \in l^{2}(\{V_{w}\}_{w\in\Omega})
\end{equation*}
is well defined and bounded operator.\\
By \eqref{2..3} its adjoint operator  $T^{\ast}_{CC^{'}}: U\rightarrow l^{2}(\{V_{w}\}_{w\in\Omega})$ given by
\begin{equation}
T^{\ast}_{CC^{'}}(x)=\{\Lambda_{\omega}(C^{'}C)^{\frac{1}{2}}x\}_{\omega \in \Omega}
\end{equation}

Since  $\Gamma= \{\Gamma_{w}: w\in\Omega\}$ is also a $(C-C^{'})$-controlled continue g-Bessel sequence for $U$ with respect to $\{V_{\omega}\}_{\omega \in \Omega}$.\\
Again by theorem \ref{sspp}, the operator  $K_{CC^{'}}:l^{2}(\{V_{w}\}_{w\in\Omega}) \rightarrow U$ given by
\begin{equation*}
K_{CC^{'}}(\{y_{w}\}_{w\in\Omega})=\int_{w\in\Omega}(CC^{'})^{\frac {1}{2}}\Gamma^{\ast}_{\omega}y_{\omega}d\mu(w) \qquad \forall \{y_{w}\}_{w\in\Omega} \in l^{2}(\{V_{w}\}_{w\in\Omega})
\end{equation*}
is well defined and bounded operator. Again its adjoint operator is given by 
\begin{equation*}
K^{\ast}_{CC^{'}}(x)=\{\Gamma_{\omega}(C^{'}C)^{\frac{1}{2}}x\}_{\omega \in \Omega} \qquad \forall x \in U
\end{equation*}
Hence for any $x \in U$, the operator defined in \eqref{2.13} can be written as :
\begin{equation*}
L_{CC^{'}}(x)=\int_{ \Omega}C^{'}\Gamma^{\ast}_{w}\Lambda_{\omega}Cxd\mu(w)=K_{CC^{'}}T^{\ast}_{CC^{'}}x 
\end{equation*}
Since $L_{CC^{'}}$ is surjective then for any $x\in U$, there exists $y\in U$ such that:\\ $x=L_{CC^{'}}x=K_{CC^{'}}T^{\ast}_{CC^{'}}x$ and $T^{\ast}_{CC^{'}}x \in l^{2}(\{V_{w}\}_{w\in\Omega})$\\
This implies that $K_{CC^{'}}$ is surjective. As a result of lemma\ref{3}, we have $K^{\ast}_{CC^{'}}$ is bounded below, that is there exists $m>0$ such that:
\begin{equation*}
\langle K^{\ast}_{CC^{'}}x,K^{\ast}_{CC^{'}}x\rangle \geq m\langle x,x\rangle \qquad \forall x\in U
\end{equation*}
\begin{equation*}
\Longrightarrow \langle K_{CC^{'}}K^{\ast}_{CC^{'}}x,x\rangle \geq m\langle x,x\rangle \qquad \forall x\in U
\end{equation*}
\begin{equation*}
\Longrightarrow \langle\int_{ \Omega}(CC^{'})^{\frac {1}{2}}\Gamma^{\ast}_{w}\Gamma_{\omega}(CC^{'})^{\frac {1}{2}}xd\mu(\omega),x\rangle \geq m\langle x,x\rangle \qquad \forall x\in U
\end{equation*}
\begin{equation*}
\Longrightarrow \int_{ \Omega} \langle \Gamma_{w}Cx,\Gamma_{w}C^{'}x\rangle d\mu(\omega) \geq m\langle x,x\rangle \qquad \forall x\in U
\end{equation*}
Hence $\Gamma= \{\Gamma_{w}: w\in\Omega\}$ is also a $(C-C^{'})$-controlled continue g-frames for $U$ with respect to $\{V_{\omega}\}_{\omega \in \Omega}$
\end{proof}

\bibliographystyle{amsplain}

\begin{thebibliography}{XX}

\bibitem{Ara} L. Aramba\v{s}i\'{c}, \emph{On frames for countably generated Hilbert $\mathcal{C}^{\ast}$-modules}, Proc. Amer. Math. Soc. 135 (2007) 469-478.
\bibitem{Deh} A.Alijani,M.Dehghan, \emph{$\ast$-frames in Hilbert $\mathcal{C}^{\ast}$modules},U. P. B. Sci. Bull. Series A 2011.
\bibitem{Con} J.B.Conway ,\emph{A Course In Operator Theory},AMS,V.21,2000.
\bibitem{13} I. Daubechies, A. Grossmann, and Y. Meyer, \emph{Painless nonorthogonal expansions}, J. Math.
Phys. 27 (1986), 1271-1283.
\bibitem{Duf} R. J. Duffin, A. C. Schaeffer, \emph{A class of nonharmonic fourier series}, Trans. Amer. Math. Soc. 72 (1952),
341-366.
\bibitem{Dav} F. R. Davidson, \emph{$\mathcal{C}^{\ast}$-algebra by example},Fields Ins. Monog. 1996.
\bibitem{Gab} D. Gabor, \emph{Theory of communications}, J. Elec. Eng. 93 (1946), 429-457.
\bibitem{14} J. P. Gabardo and D. Han, \emph{Frames associated with measurable space}, Adv. Comp. Math. 18
(2003), no. 3, 127-147.
\bibitem{Frank} M. Frank, D. R. Larson, Frames in Hilbert $C^{\ast}$-modules and $C^{\ast}$-algebras, J. Oper. Theory 48 (2002), 273-314.

\bibitem{Pas} W. Paschke, \emph{Inner product modules over $B^{\ast}$-algebras}, Trans. Amer. Math. Soc., (182)(1973), 443-468.
\bibitem{R1} M. Rossafi and S. Kabbaj, \emph{$\ast$-g-frames in tensor products of Hilbert $C^{\ast}$-modules}, Ann. Univ. Paedagog. Crac. Stud. Math. 17 (2018), 15-24.
\bibitem{R2} M. Rossafi and S. Kabbaj, \emph{$\ast$-K-g-frames in Hilbert $C^{\ast}$-modules}, Journal of
Linear and Topological Algebra Vol. 07, No. 01, 2018, 63-71.
\bibitem{R3} M. Rossafi, A. Touri, H. Labrigui and A. Akhlidj, \emph{Continuous $\ast$-K-G-Frame in Hilbert $C^{\ast}$-Modules}, Journal of Function Spaces, vol. 2019, Article ID 2426978, 5 pages, 2019.
\bibitem{R4} M. Rossafi and S. Kabbaj, \emph{Generalized Frames for $B(\mathcal{H, K})$}, accepted for publication in Iranian Journal of Mathematical Sciences and Informatics.
\bibitem{MR1} M.Rahmani, On Some properties of c-frames, J.Math.Res.Appl, Vol. 37(4), (2017),466-476.
\bibitem{MR2} M.Rahmani, Sum of c-frames, c-Riesz Bases and orthonormal mapping, U.P.B.Sci. Bull, Series A,Vol.77(3), (2015),3-14
\bibitem{ARAN} A.Rahmani, A.Najati, and Y.N.Deghan, Continuous frames in Hilbert spaces, methods of Functional Analysis and Topology Vol. 12(2), (2006),170-182.
\bibitem {LEC} E.C. Hilbert $C^{\ast}$-modules, A Toolkit for Operator Algebraists: University of Leeds, Cambridge University Press, (1995). 
\bibitem{STAJP} S.T.Ali, J.P.Antoine and J.P.Gazeau, continuous frames in Hilbert spaces, Annals of physics 222 (1993), 1-37
\bibitem{MRKAN} Mehdi Rashidi Kouchi1 and Akbar Nazari2, Hindawi Publishing Corporation Abstract and Applied Analysis
Volume 2011, Article ID 361595, 20 pages.

\end{thebibliography}

\end{document}